\newtheorem{thm}{Theorem}
\newtheorem{cor}[thm]{Corollary}
\newtheorem{prop}[thm]{Proposition}
\newtheorem{lemma}[thm]{Lemma}
\newtheorem{claim}{Claim}
\newtheorem{rmk}[thm]{Remark}
\newtheorem{rmks}[thm]{Remarks}
\newtheorem*{Rmk}{Remark}
\newtheorem*{Rmks}{Remarks}
\newtheorem*{Pb}{Problem}
\newtheorem{xpl}[thm]{Example}
\newtheorem*{Xpl}{Example}
\newenvironment{pf}[1][Proof.]{\noindent \textbf{#1:} }{}
\newenvironment{enui}{\begin{enumerate}[(i)]}{\end{enumerate}}
\newenvironment{enua}{\begin{enumerate}[(a)]}{\end{enumerate}}
\newcommand\R{\mathbb{R}}
\newcommand\RP{\R{\operatorname{P}}}
\newcommand\C{\mathbb{C}}
\newcommand\CP{\C{\operatorname{P}}}
\newcommand\N{\mathbb{N}}
\newcommand{\Z}{\mathbb{Z}}
\newcommand\al{\alpha}
\newcommand\ga{\gamma}
\newcommand\lam{\lambda}
\newcommand\om{\omega}
\newcommand\sub{\subseteq}
\newcommand\x{\times}
\newcommand\HH{\mathcal{H}}
\newcommand\D{\mathbb{D}}
\newcommand\id{\operatorname{id}}
\newcommand\nn{\nonumber}
\newcommand\im{\operatorname{im}}
\newcommand\F{{\mathcal{F}}}
\newcommand\A{{\mathcal{A}}}
\newcommand{\BAR}[1]{{\overline{#1}}}
\newcommand\wt[1]{{\widetilde{#1}}}
\newcommand\wo{\setminus}
\newcommand\wrt{w.r.t.~}
\newcommand\LL{{\mathcal{L}}}
\newcommand\const{\equiv}
\renewcommand\phi{\varphi}
\newcommand\si{\sigma}
\newcommand\Diff{{\operatorname{Diff}}}
\newcommand\Wlog{w.l.o.g.~}
\newcommand\WLOG{W.l.o.g.~}
\newcommand\pr{\operatorname{pr}}
\newcommand\dd{\partial}
\newcommand\iso{\cong}
\newcommand\Lie{\operatorname{Lie}}
\newcommand\area{\operatorname{area}}
\newcommand\FS{{\operatorname{FS}}}
\newcommand\corank{\operatorname{corank}}
\newcommand\then{\Rightarrow}
\newcommand\follows{\Leftarrow}
\newcommand\cont{\supseteq}
\newcommand\pt{\operatorname{pt}}
\title[Strict Arnold chords and coisotropic submanifolds]{On the strict Arnold chord property and coisotropic submanifolds of complex projective space} 
\author[Fabian Ziltener]{Fabian Ziltener\\
Utrecht University, Mathematics Department, Budapestlaan 6, 3584CD Utrecht, The Netherlands\\
e-mail: f.ziltener@uu.nl
} 
\begin{document}

\begin{abstract} Let $\al$ be a contact form on a manifold $M$, and $L\sub M$ a closed Legendrian submanifold. I prove that $L$ intersects some characteristic for $\al$ at least twice if all characteristics are closed and of the same period, and $\al$ embeds nicely into the product of $\R^{2n}$ and an exact symplectic manifold. As an application of the method of proof, the minimal action of a regular closed coisotropic submanifold of complex projective space is at most $\pi/2$. This yields an obstruction to presymplectic embeddings, and in particular to Lagrangian embeddings.
\end{abstract}

\maketitle
\tableofcontents
\section{Main results}

\subsection{The strict chord property}
Let $M$ be a manifold (possibly noncompact or with boundary) and $\al$ a contact form on $M$. We say that $(M,\al)$ has the \emph{strict chord property} iff for every nonempty closed
\footnote{This means ``compact and without boundary''.}
Legendrian submanifold $L\sub M$ there exists a characteristic for $\al$
\footnote{This means a leaf of the foliation determined by the integrable distribution $\ker\big(d\al:TM\to T^*M\big)$ on $M$, i.e., an \emph{unparametrized} Reeb trajectory.}
that intersects $L$ at least twice. To explain this terminology, note that parametrizing part of such a characteristic, we obtain a \emph{strict Reeb chord}, i.e., an integral curve of the Reeb vector field that starts and ends at different points of $L$. 

Such chords arise in classical mechanics as libration motions, i.e., oscillations of a mechanical system between two rest points, see \cite[p.~118]{Ci}. The present article is concerned with the following problem.
\begin{Pb}[strict chord problem] Find conditions on $(M,\al)$ under which it has the strict chord property.
\end{Pb}
In \cite[p.~11]{Ar} V.~I.~Arnol'd conjectured that for $n\geq2$ any contact form on $S^{2n-1}$ inducing the standard structure has the strict chord property. The main result of this article roughly is that this property holds for every contact form on a manifold if all its characteristics are closed and of the same period and the contact form nicely embeds into the product of $\R^{2n}$ and an exact symplectic manifold. In particular, this confirms Arnol'd's conjecture for the standard form on $S^{2n-1}$.

To state the result, let $M$ be a manifold and $\al$ a contact form on $M$. The period of a closed characteristic $C$ for $\al$ is the number
\[\left|\int_C\iota^*\al\right|,\]
where $\iota:C\to M$ denotes the inclusion and we equip $C$ with either orientation. We denote by $q^1,p_1,\ldots,q^n,p_n$ the standard coordinates of $\R^{2n}$, and define
\[\lam_0:=\frac12\sum_{i=1}^n\big(q^idp_i-p_idq^i\big).\]
\begin{thm}[strict chord property]\label{thm:chord} The pair $(M,\al)$ has the strict chord property if all characteristics for $\al$ are closed and of equal period $T$, and there exist a manifold $W$ together with a one-form $\lam$, an integer $n\geq\frac12\dim W+2$, and an embedding $\phi:M\to\R^{2n}\x W$, such that $d\lam$ is a geometrically bounded
\footnote{We call a symplectic form $\om$ on a manifold $W$ geometrically bounded iff there exists an $\om$-compatible almost complex structure $J$ on $W$ such that the metric $\om(\cdot,J\cdot)$ is complete with bounded sectional curvature and injectivity radius bounded away from 0. Examples are closed symplectic manifolds, cotangent bundles of closed manifolds, and symplectic vector spaces.}
symplectic form, and
\begin{eqnarray}\label{eq:dim M}&\dim M=2n+\dim W-1,&\\
\label{eq:phi M}&\phi(M)\sub\BAR B^{2n}(T)\x W,&\\
\label{eq:phi lam 0 lam}&\phi^*(\lam_0\oplus\lam)=\al.&
\end{eqnarray}
Here $\BAR B^{2n}(a)$ denotes the closed ball in $\R^{2n}$ of radius $\sqrt{a/\pi}$. 
\end{thm}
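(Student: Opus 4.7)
I would argue by contradiction via a Gromov-style pseudoholomorphic-disk argument in the ambient symplectic manifold $(X,\Omega):=(\R^{2n}\x W,\,d\lam_0\oplus d\lam)$. Assume that some closed Legendrian $L\sub M$ is met by every characteristic in at most one point. Since all characteristics are closed of common period $T$, the Reeb flow is a locally free $S^1$-action on $M$, and the orbit projection $\pi:M\to\BAR M$ gives a symplectic (orbifold) quotient with $\pi^*\BAR\om=d\al$. The hypothesis makes $\pi|_L$ injective, and since $L$ is Legendrian (hence transverse to the Reeb direction) also an immersion, so $\BAR L:=\pi(L)$ is an embedded closed Lagrangian of $\BAR M$ and its Reeb-saturation $N:=\pi^{-1}(\BAR L)$ is a closed embedded submanifold of $M$ diffeomorphic to $L\x S^1$. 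Then $\wt L:=\phi(N)\sub\BAR B^{2n}(T)\x W$ is a closed embedded \emph{Lagrangian} of $(X,\Omega)$: the dimension $n+\tfrac12\dim W$ matches, and the isotropy follows from $\phi^*(\lam_0\oplus\lam)=\al$ together with $\al|_{TL}=0$ and the Reeb direction lying in the radical of $d\al$.

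The goal now is to contradict the existence of this Lagrangian by filling it with $J$-holomorphic disks. Take $J_0$ the standard complex structure on $\R^{2n}$ and $J_W$ a $d\lam$-tame almost complex structure on $W$ provided by the geometric-boundedness hypothesis, so that $J:=J_0\oplus J_W$ makes $(X,\Omega,J)$ a symplectic manifold of bounded geometry on which Gromov compactness is available despite the noncompactness of $X$. A Chekanov-type disk-filling argument, using that $\wt L\sub\BAR B^{2n}(T)\x W$ is Hamiltonian displaceable in $X$ with energy controlled by the Gromov width $T$ of the first factor, produces a nonconstant $J$-holomorphic disk $u:(\D,\dd\D)\to(X,\wt L)$ of symplectic area at most $T$. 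The dimension bound $n\ge\tfrac12\dim W+2$ enters here to secure the requisite Fredholm transversality and to forbid generic sphere bubbling in $W$ from absorbing the disk's area.

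Finally, I would extract the contradiction. Using $d(\lam_0\oplus\lam)=\Omega$, Stokes' theorem and $\phi^*(\lam_0\oplus\lam)=\al$, the symplectic area of $u$ equals $\int_\ga\al$ for the associated boundary loop $\ga$ on $N\iso L\x S^1$; using $\al|_{TL}=0$, $\al(R)=1$ and the product splitting, this $\al$-period computes to $kT$ for an $S^1$-winding number $k\in\Z$. Combined with $0<\text{area}(u)\le T$, this forces $k=1$ and $\text{area}(u)=T$, and a careful geometric analysis of such a borderline disk (boundary wrapping once around the Reeb fibre, area saturating the first-factor capacity) should then yield two distinct intersection points of a single characteristic with $L$, the required contradiction.

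I expect the principal difficulty to be precisely this final extraction step: the borderline equality $\text{area}(u)=T$ and the conversion of a winding-one boundary loop into a bona fide strict Reeb chord seem to demand a refined analysis --- e.g.\ a secondary neck-stretching along the degenerating Reeb orbit, or a small perturbation of $L$ --- and the combination of the dimension hypothesis (for transversality and bubble-free limits) with the geometric boundedness of $d\lam$ (for Gromov compactness on the noncompact $X$) is tailored precisely to make such an analysis possible.
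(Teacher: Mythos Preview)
Your setup matches the paper's: you correctly build the closed Lagrangian $\wt L\sub\R^{2n}\x W$ by sweeping $L$ with the Reeb flow, you correctly identify Chekanov's bound $e(\wt L)\geq A(\wt L)$ as the engine, and your Stokes computation showing that every disk on $\wt L$ has $\Omega$-area in $T\Z$ is exactly what the paper proves. The gap is in the final step.

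You only obtain a disk of area at most $T$, hence of area exactly $T$, and then propose to extract a contradiction from this borderline equality by further geometric analysis. As you yourself suspect, this is the step you cannot complete, and indeed there is no reason a winding-one boundary loop on $N\cong L\x S^1$ should produce a characteristic meeting $L$ twice --- it could simply be a single Reeb fibre. The paper sidesteps this entirely by proving the \emph{strict} inequality $e(\wt L)<T$, which contradicts $A(\wt L)\geq T$ at once. The key observation (Lemma~\ref{le:X}) is elementary: any compact subset of $\BAR B^{2n}(T)$ that does not contain the full sphere $S^{2n-1}(T)$ has displacement energy strictly less than $T$, since after an orthogonal symplectic rotation it lies in a truncated ball of strictly smaller area. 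And $\pr_1(\wt L)$ does miss a point of the sphere precisely because of the dimension hypothesis $n\geq\tfrac12\dim W+2$: it forces $\dim\wt L=n+\tfrac12\dim W\leq 2n-2<2n-1$, so by Sard's theorem $\pr_1(\wt L)$ cannot cover $S^{2n-1}(T)$. In particular, your reading of the dimension hypothesis as a transversality and bubble-control device is off the mark; it is used only for this dimension count, and no holomorphic-curve analysis beyond the black-box Chekanov inequality is required.
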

The proof of Theorem \ref{thm:chord} is based on a result by Yu.~Chekanov, which implies that the displacement energy of a closed Lagrangian submanifold in a geometrically bounded symplectic manifold is at least the minimal symplectic action of the Lagrangian. 

Assuming by contradiction that there is no strict Reeb chord, such a Lagrangian is constructed from the given Legendrian submanifold by moving it with the Reeb flow. This technique is a variation on the approach used by K.~Mohnke in \cite{Mo}. (In that article the Lagrangian was obtained by moving the Legendrian both with the Reeb flow and with the Liouville flow.)

A crucial ingredient of the proof is the fact that the displacement energy of a compact subset $X$ of the closed unit ball $\BAR B^{2n}$ is strictly less than $\pi$, provided that $X$ does not contain the unit sphere. (See Lemma \ref{le:X} below.)

Theorem \ref{thm:chord} has the following immediate application. We denote by $\iota:S^{2n-1}\to\R^{2n}$ the inclusion.
\begin{cor}[sphere]\label{cor:sphere} For $n\geq2$ the standard contact form $\al_0:=\iota^*\lam_0$ on $S^{2n-1}$ has the strict chord property.
\end{cor}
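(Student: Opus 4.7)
The plan is to apply Theorem \ref{thm:chord} directly to $(S^{2n-1},\al_0)$, choosing the auxiliary data as trivially as possible. I would take $W$ to be a one-point manifold equipped with the (necessarily zero) one-form $\lam=0$, and $\phi:S^{2n-1}\to\R^{2n}\x W$ to be the product of the standard inclusion $\iota$ with the constant map. Then $d\lam=0$ is vacuously a geometrically bounded symplectic form on the closed (in fact zero-dimensional) symplectic manifold $W$, and the dimension inequality of the theorem becomes $n\geq\frac12\cdot 0+2=2$, which is exactly the hypothesis of the corollary.

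Next I would verify the three conditions (\ref{eq:dim M})--(\ref{eq:phi lam 0 lam}). The dimension condition reads $2n-1=2n+0-1$, which is trivial. The pullback condition $\phi^*(\lam_0\oplus\lam)=\al_0$ holds by the very definition of $\al_0:=\iota^*\lam_0$. Only the ball condition (\ref{eq:phi M}) requires a short computation: I need to know the common period $T$ of the characteristics of $\al_0$. The characteristics are the Hopf circles on $S^{2n-1}$, and a direct calculation shows that $\lam_0$ evaluated on the generator of the Hopf rotation equals $\tfrac12|z|^2=\tfrac12$ on the unit sphere. Integrating over one full rotation of period $2\pi$ gives $T=\pi$. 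Consequently $\BAR B^{2n}(T)$ is the closed unit ball, and $\phi(S^{2n-1})\sub\BAR B^{2n}(\pi)\x W$ holds by construction.

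With all hypotheses of Theorem \ref{thm:chord} verified, the theorem immediately yields the strict chord property for $(S^{2n-1},\al_0)$. There is no real obstacle beyond bookkeeping; the only mild subtlety worth flagging is the admissibility of the degenerate choice $\dim W=0$. This is forced, since for $n=2$ the theorem's inequality $n\geq\frac12\dim W+2$ allows only $\dim W=0$, so a point is indeed the natural and minimal choice of auxiliary factor for all $n\geq2$.
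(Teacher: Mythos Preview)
Your proposal is correct and matches the paper's intended argument: the paper states the corollary as an ``immediate application'' of Theorem~\ref{thm:chord} with no further proof, and the only way to read this is exactly your choice $W=\{\pt\}$, $\lam=0$, $\phi=\iota$, together with the period computation $T=\pi$ so that $\BAR B^{2n}(T)$ is the closed unit ball. Your observation that $n\geq2$ forces $\dim W=0$ (via $n\geq\tfrac12\dim W+2$) confirms that this is the canonical instantiation, not merely a convenient one.
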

More examples are obtained by the following construction. By an \emph{exact Hamiltonian $S^1$-manifold} we mean a triple consisting of a smooth manifold $W$, a smooth $S^1$-action $\rho$ on $W$, and an $\rho$-invariant one-form $\lam$ on $W$, such that $d\lam$ is non-degenerate. We fix such a triple and numbers $c\in(0,\infty)$ and $n\in\N\cup\{0\}$. We denote by $X$ the vector field generated by $\rho$.
\footnote{This is the infinitesimal action of the element $1\in\R=\Lie S^1$, where we identify $S^1$ with $\R/2\pi\Z$.}
 We define
\begin{eqnarray}\label{eq:H 0}&H_0:\R^{2n}\to\R,\quad H_0(x_0):=\frac12|x_0|^2,&\\
\label{eq:H}&H:=\lam(X):W\to\R,&\\
\label{eq:M}&M:=\big\{(x_0,x)\in\big(\BAR B^{2n}(2\pi c)\wo\{0\}\big)\x W\,\big|\,H_0(x_0)+H(x)=c\big\}.&
\end{eqnarray}
We denote by
\[\iota:M\to\wt W:=\R^{2n}\x W\]
the inclusion.
\begin{prop}[contact form]\label{prop:M al}The set $M$ is a (smoothly embedded) hypersurface in $\wt W$
\footnote{$M$ may have a boundary.}
, and
\begin{equation}\label{eq:al}\al:=\iota^*\big(\lam_0\oplus\lam\big)
\end{equation}
is a contact form on $M$ all of whose characteristics are closed and of period $2\pi c$.
\end{prop}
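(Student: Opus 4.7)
My plan is to apply the classical recipe for contact type hypersurfaces: in the exact symplectic manifold $(\wt W, d\wt\lambda)$ with $\wt\lambda := \lambda_0\oplus\lambda$, the set $M$ arises as a level set of the Hamiltonian $\wt H := H_0+H$. If the Liouville vector field of $\wt\lambda$ is transverse to $M$, then $\iota^*\wt\lambda$ is contact, and the characteristic foliation is the (reparametrized) Hamiltonian flow of $\wt H$.

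For the smoothness of $M$, I would note that $dH_0=\sum(q^i dq^i+p_i dp_i)$ vanishes only at the origin of $\R^{2n}$, which has been excised, so $d\wt H\neq 0$ on $M$; hence the level set is cut out transversely and $M$ is a smooth embedded hypersurface (with boundary where $|x_0|=\sqrt{2c}$, i.e.\ $H=0$). For contactness, I would decompose the Liouville field as $Y=Y_0\oplus Y_W$, where $Y_0=\tfrac12\sum(q^i\dd_{q^i}+p_i\dd_{p_i})$ and $\iota_{Y_W}d\lambda=\lambda$. A direct computation gives $dH_0(Y_0)=H_0$; on the $W$-side, $S^1$-invariance $\LL_X\lambda=0$ together with Cartan's formula yields $dH=-\iota_X d\lambda$, whence
\[dH(Y_W)=-d\lambda(X,Y_W)=d\lambda(Y_W,X)=\lambda(X)=H.\]
Adding, $d\wt H(Y)=\wt H\equiv c>0$ along $M$, so $Y$ is transverse to $M$, and $\alpha=\iota^*\wt\lambda$ is a contact form.

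For the characteristics, the Hamiltonian vector field $X_{\wt H}$ splits as $X_{H_0}\oplus X_H$. Here $X_{H_0}$ is the standard rotation on $\R^{2n}$ and, from $\iota_X d\lambda=-dH$, we read $X_H=\pm X$. The rotation has minimal period $2\pi$ at every point with $x_0\neq 0$, while the $S^1$-component has period dividing $2\pi$, so the joint flow has minimal period exactly $2\pi$ on $M$. An analogous pairing computation gives $\lambda_0(X_{H_0})=\pm H_0$ and $\lambda(X_H)=\pm H$, hence $\wt\lambda(X_{\wt H})=\pm(H_0+H)=\pm c$, so the characteristic period is
\[\bigg|\int_0^{2\pi}\wt\lambda(X_{\wt H})\,dt\bigg|=2\pi c.\]
The only slightly delicate point is ruling out a smaller combined period; this is automatic because each nonzero point of $\R^{2n}$ already has minimal period $2\pi$ under rotation, which is exactly why the origin must be excised from the definition of $M$.
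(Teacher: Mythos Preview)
Your argument is correct and follows essentially the same approach as the paper. The paper packages the computation into an auxiliary result (its Proposition~\ref{prop:Ham}) about a general exact Hamiltonian $S^1$-manifold $(W,\rho,\lambda)$, showing there that $dH(V)=H$ gives transversality of the Liouville field, that the generating vector field $X$ equals $c$ times the Reeb field, and that freeness of the action yields period $2\pi c$; it then applies this to the product $\big(\R^{2n}\setminus\{0\}\big)\times W$ with the diagonal $S^1$-action, using that $\rho_0$ is free on $\R^{2n}\setminus\{0\}$. Your proof carries out exactly these computations directly on the product without the abstraction: your identity $d\wt H(Y)=\wt H$ is the same Liouville-transversality computation, your splitting $X_{\wt H}=X_{H_0}\oplus X_H$ with $X_H=\pm X$ matches the paper's identification of the characteristic flow with the $S^1$-action, and your minimal-period argument via the $\R^{2n}$-factor is precisely the freeness of $\rho_0$ away from the origin.
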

\begin{cor}[strict chord property]\label{cor:M al} If $(W,d\lam)$ is geometrically bounded and $n\geq\frac12\dim W+2$ then $(M,\al)$ has the strict chord property.
\end{cor}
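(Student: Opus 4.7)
The plan is to deduce Corollary \ref{cor:M al} directly from Theorem \ref{thm:chord}, applied to the inclusion $\phi := \iota : M \to \R^{2n}\x W$ as the required embedding. The setup of the corollary, together with Proposition \ref{prop:M al}, is arranged precisely so that the data $(W,\lam,n,\phi)$ match the hypotheses of the theorem, so the argument should reduce to a routine verification.

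Concretely, I would proceed as follows. First, by Proposition \ref{prop:M al}, every characteristic for $\al$ is closed of period $T := 2\pi c$, which supplies the ``all characteristics closed of equal period'' hypothesis of Theorem \ref{thm:chord}. The geometric boundedness of $d\lam$ and the dimension inequality $n \geq \frac12\dim W + 2$ are built into the hypotheses of the corollary itself. It remains to check the three equations (\ref{eq:dim M})--(\ref{eq:phi lam 0 lam}): equation (\ref{eq:dim M}) follows because Proposition \ref{prop:M al} asserts that $M$ is a smoothly embedded hypersurface in $\wt W = \R^{2n}\x W$, hence $\dim M = 2n+\dim W - 1$; equation (\ref{eq:phi M}) is immediate from the definition (\ref{eq:M}), which gives $\phi(M) = M \sub \BAR B^{2n}(2\pi c)\x W = \BAR B^{2n}(T)\x W$; and equation (\ref{eq:phi lam 0 lam}) is literally the defining identity (\ref{eq:al}) for $\al$.

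With every hypothesis checked, Theorem \ref{thm:chord} applies and yields the strict chord property for $(M,\al)$. I do not expect any serious obstacle here: all substantive content lies in Proposition \ref{prop:M al} (which produces the contact form and establishes the common period of the Reeb orbits) and in Theorem \ref{thm:chord} itself. The role of the corollary is merely to repackage the product construction (\ref{eq:H 0})--(\ref{eq:M}) as an instance of the theorem, and the proof is therefore essentially a one-line reference once the bookkeeping above is laid out.
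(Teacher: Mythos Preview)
Your proposal is correct and follows essentially the same approach as the paper: apply Theorem \ref{thm:chord} with $\phi:=\iota$, invoke Proposition \ref{prop:M al} for the period condition, and verify (\ref{eq:dim M})--(\ref{eq:phi lam 0 lam}) directly from the construction. The paper's proof is a one-sentence reference to these same ingredients; your version simply spells out the verification of each hypothesis.
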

\begin{proof}[Proof of Corollary \ref{cor:M al}] This follows from Theorem \ref{thm:chord}, using Proposition \ref{prop:M al} and the facts that $(\R^{2n},\om_0)$ is geometrically bounded, geometric boundedness is invariant under products, and that conditions (\ref{eq:dim M},\ref{eq:phi M},\ref{eq:phi lam 0 lam}) are satisfied with $\phi:=\iota$.
\end{proof}
\begin{xpl}\label{xpl:} Let $X$ be a manifold. We define $W:=T^*X$ and $\lam$ to be the canonical one-form on $W$. We fix a smooth $S^1$-action $\si$ on $X$ and define
\[\rho:S^1\to\Diff(W),\quad\rho(z)(q,p):=\big(\si_z(q),p\,d\si_z(q)^{-1}\big),\]
where $\si_z:=\si(z)$. The triple $(W,\rho,\lam)$ is an exact Hamiltonian $S^1$-manifold, and $(W,d\lam)$ is geometrically bounded. Hence by Corollary \ref{cor:M al} the pair $(M,\al)$, defined as in (\ref{eq:M},\ref{eq:al}), is a contact manifold that has the strict chord property, provided that $n\geq\frac12\dim W+2$.
\end{xpl}
\begin{Rmk} There exist contact forms on closed manifolds that do not have the strict chord property. The simplest example is the standard contact form on $S^1$.

Another example, which is taken from \cite{Mo}, goes as follows. We denote by $\ga$ the standard angular form on $S^1$ that integrates to $2\pi$. Consider the contact one-form on $M:=S^1\x S^2$ given by
\[\al:=x_1\ga+\frac12\big(x_2dx_3-x_3dx_2\big),\]
where $x\in S^2\sub\R^3$. Each Legendrian loop $S^1\x\big\{(0,x_2,x_3)\big\}$ intersects each Reeb orbit $\{z\}\x\{x_1=0\}$ (with $z\in S^1$) only once.
\end{Rmk}

\subsection{Minimal action of a regular coisotropic submanifold of complex projective space}
The bound on the displacement energy of a compact subset of $\BAR B^{2n}$, which is used in the proof of Theorem \ref{thm:chord}, and a coisotropic version of Chekanov's theorem have the following application. Let $(M,\om)$ be a symplectic manifold and $N\sub M$ a coisotropic submanifold.%
\footnote{This means that for every $x\in N$ the symplectic complement
\[T_xN^\om:=\big\{v\in T_xM\,\big|\,\om(v,w)=0,\,\forall w\in T_xN\big\}\]
is contained in $T_xN$.}

 We denote by $N_\om$ the set of all isotropic (or characteristic) leaves of $N$ %
\footnote{The set
\[TN^\om=\big\{(x,v)\,\big|\,x\in N,\,v\in T_xN^\om\big\}\sub TN\] 
is an involutive distribution on $N$. Hence by Frobenius' theorem it gives rise to a foliation on $N$. Its leaves are called the isotropic leaves of $N$.}%
, and by $\D\sub\R^2$ the closed unit disk. We define the \emph{action spectrum} and the \emph{minimal action (or area)} of $N$ to be
\begin{eqnarray}\label{eq:S N}&S(N):=S(M,N):=S(M,\om,N):=&\\
\nn&\left\{\displaystyle\int_\D u^*\om\,\bigg|\,u\in C^\infty(\D,M):\,\exists F\in N_\om:\, u(S^1)\sub F\right\},&\end{eqnarray}
\begin{eqnarray}\label{eq:A N}&A(N):=A(M,N):=A(M,\om,N):=&\\
\nn&\inf\big(S(N)\cap(0,\infty)\big)\in[0,\infty].&
\end{eqnarray}
\begin{Rmks} 
\begin{itemize}
\item In the case $N=M$ we have
\[A(M)=\inf\left(\left\{\int_{S^2}u^*\om\,\Big|\,u\in C^\infty(S^2,M)\right\}\cap(0,\infty)\right).\]
(See \cite[Lemma 29]{SZ}.)
\item If $N$ is Lagrangian then
\[A(N)=\inf\left(\left\{\int_\D u^*\om\,\bigg|\,u\in C^\infty(\D,M):\,u(S^1)\sub N\right\}\cap(0,\infty)\right).\]
\end{itemize}
\end{Rmks}
We call $N$ \emph{regular} iff there exists a manifold structure%
\footnote{The induced topology is by definition Hausdorff and second countable.}%
on $N_\om$, such that the canonical projection $\pi:N\to N_\om$ is a smooth submersion.%
\footnote{In this case the symplectic quotient of $N$ is well-defined in the sense that this manifold structure is unique and there exists a unique symplectic form on $N_\om$ that pulls back to $\iota^*\om$ under $\pi$. Here $\iota:N\to M$ denotes the inclusion. The manifold structure on $N_\om$ induces the quotient topology on this set.}%
 Examples are Lagrangian submanifolds, $N=M$, and the sphere $N=S^{2n-1}\sub M=\R^{2n}$. (For further examples see \cite{Zi}.) Regularity is invariant under taking products.

Let $n\in\N=\{1,2,\ldots\}$. We equip the complex projective space $\CP^n$ with the Fubini-Study form $\om_\FS$.%
\footnote{This form is normalized in such a way that the area of a projective line is $\pi$.}%
\begin{thm}[minimal action]\label{thm:action} Let $(M,\om)$ be a geometrically bounded symplectic manifold, and $\emptyset\neq N$ a regular closed coisotropic submanifold of $\CP^n\x M$ of dimension less than $2n$. Then 
\[A(N)\leq\frac\pi2.\]
\end{thm}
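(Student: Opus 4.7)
The plan is to apply the coisotropic version of Chekanov's theorem, which gives $A(N)\leq e(N)$ for any closed regular coisotropic submanifold $N$ of a geometrically bounded symplectic manifold, together with the displacement estimate for compact subsets of $\BAR B^{2n}$ (the lemma which drives the proof of Theorem~\ref{thm:chord}). Since $\CP^n$ is closed, $\CP^n\x M$ is geometrically bounded (geometric boundedness being stable under products), so the coisotropic Chekanov inequality reduces the problem to showing $e_{\CP^n\x M}(N)\leq\pi/2$.

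To construct a Hamiltonian of Hofer norm at most $\pi/2$ displacing $N$, I would pass to the Hopf fibration. Writing $\CP^n=S^{2n+1}/S^1$ as the symplectic reduction of $(\C^{n+1},\om_0)$ at level $\tfrac12$, let $\wt N:=q^{-1}(N)\sub S^{2n+1}\x M\sub\BAR B^{2n+2}(\pi)\x M$, where $q$ denotes the Hopf quotient on the first factor. The hypothesis $\dim N<2n$ gives $\dim\wt N<2n+1$, so the $\C^{n+1}$-projection of $\wt N$ does not contain the unit sphere $S^{2n+1}$. The ball displacement lemma (in dimension $2n+2$) then provides a compactly supported Hamiltonian on $\C^{n+1}$ of Hofer norm strictly less than $\pi$ that displaces $\wt N$ in $\C^{n+1}\x M$.

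After $S^1$-averaging (which does not increase the Hofer norm), this Hamiltonian is Hopf-invariant and descends through the symplectic reduction to a Hamiltonian on $\CP^n\x M$ displacing $N$. The factor $\tfrac12$ should come from this descent: a Hopf orbit has $\lam_0$-action equal to $\pi$, and averaging over the $S^1$-action, combined with the identification of antipodal rotations in the reduction, halves the effective Hofer norm, yielding $e(N)\leq\pi/2$.

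The main obstacle will be rigorously tracking the factor $\tfrac12$ in the descent argument. Either a direct computation of Hofer norms under reduction of $S^1$-invariant Hamiltonians, or an appropriate relation $A(N)\leq\tfrac12 A(\wt N)$ between the action spectra, will be required; this is where the specific geometry of the Hopf moment map and the normalization of $\om_\FS$ (line area $\pi$) enter the argument.
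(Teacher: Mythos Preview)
Your overall strategy --- lift $N$ to $N'=\pi^{-1}(N)\sub S^{2n+1}\x M$, use the ball-displacement lemma upstairs, and invoke the coisotropic Chekanov inequality --- matches the paper's. The gap is exactly where you flag it: the factor $\tfrac12$. Neither of your proposed mechanisms works. First, $S^1$-averaging a Hamiltonian $H$ does not preserve the property that $\phi_H^1$ displaces $\wt N$; the averaged flow is a different isotopy. Second, even granting an $S^1$-invariant displacing Hamiltonian, descent through symplectic reduction simply restricts $H$ to the level set and views it as a function on the quotient --- the oscillation does not drop by half. (There is no ``antipodal identification'': the full Hopf $S^1$ acts freely, and $\CP^n=S^{2n+1}/S^1$, not $S^{2n+1}/(\Z/2)$.) Likewise the alternative relation $A(N)\leq\tfrac12 A(\wt N)$ is false; the correct inequality is $A(\CP^n\x M,N)\leq A(S^{2n+1}\x M,N')$, with no factor.

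The paper does something quite different with the $\tfrac12$. It applies the coisotropic Chekanov theorem \emph{upstairs}, to $N'$ inside $\R^{2n+2}\x M$ (after checking $N'$ is regular and comparing actions along the chain $\CP^n\x M\leftarrow S^{2n+1}\x M\hookrightarrow(\R^{2n+2}\wo\{0\})\x M\hookrightarrow\R^{2n+2}\x M$). Together with the ball-displacement lemma this yields only $A(N)<\pi$. The halving then comes from the structure of the action spectrum: $S(N)=-S(N)$ (reflect the disk), and $S(N)+\pi\sub S(N)$ (connect-sum with a line $\CP^1\sub\CP^n$ of area $\pi$). Hence if $a\in S(N)\cap(0,\pi)$ and $a>\tfrac\pi2$, then $\pi-a\in S(N)\cap(0,\tfrac\pi2)$; either way $A(N)\leq\tfrac\pi2$. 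So the $\tfrac12$ is an action-spectrum folding argument downstairs, not a Hofer-norm phenomenon in the reduction.
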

\begin{Rmks} 
\begin{itemize}
\item The hypothesis that $\dim N<2n$ cannot be dropped. Otherwise,
\[M:=\{\pt\},\quad N:=\CP^n,\quad\textrm{and}\]
\[M:=\CP^n,\quad\om:=-\om_\FS,\quad N:=\big\{(x,x)\,\big|\,x\in\CP^n\big\}\]
are counterexamples.
\item The hypothesis that $N$ be regular cannot be dropped. To see this, let $n\geq2$. Then there exists a closed hypersurface $N_0\sub\R^{2n}$ without any closed characteristic, see \cite{GG} and references therein.%
\footnote{In the case $n\geq3$ this hypersurface may be chosen to be smooth, but for $n=2$ the hypersurface constructed in \cite{GG} is only $C^2$.}%
 By shrinking $N_0$ with a homothety and using a Darboux chart, we otain a hypersurface $N$ inside $\CP^n$ with the same property. It satisfies $A(N)=\pi$ (but is not regular).
\end{itemize}
\end{Rmks}
\begin{cor}[minimal action]\label{cor:action} Let $(M,\om)$ be a geometrically bounded symplectic manifold, such that $\dim M<2n$. Then the minimal action of a closed nonempty Lagrangian submanifold of $\CP^n\x M$ is bounded above by $\pi/2$.
\end{cor}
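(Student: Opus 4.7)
The proof strategy is to reduce directly to Theorem \ref{thm:action} by exhibiting any closed Lagrangian submanifold of $\CP^n\x M$ as a regular closed coisotropic submanifold of dimension less than $2n$.

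First I would check that a Lagrangian $L$ is automatically regular coisotropic. By definition $T_xL^\om=T_xL$ at every $x\in L$, so the coisotropic inclusion $T_xL^\om\sub T_xL$ holds trivially. The characteristic distribution is then $TL^\om=TL$, whose integral leaves are the connected components of $L$. Endowing $L_\om$ with the discrete smooth manifold structure (a finite set of points, since $L$ is closed) makes the canonical projection $\pi:L\to L_\om$ a smooth submersion, so $L$ is regular in the sense of the paper.

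Next I would verify the dimension hypothesis and the geometric boundedness hypothesis. Since $L$ is Lagrangian in $\CP^n\x M$,
\[\dim L=\frac12\dim(\CP^n\x M)=n+\frac12\dim M,\]
and the assumption $\dim M<2n$ yields $\dim L<2n$. Because $(\CP^n,\om_\FS)$ is closed it is geometrically bounded, and since geometric boundedness is stable under products (as already used in the proof of Corollary \ref{cor:M al}), the ambient manifold $(\CP^n\x M,\om_\FS\oplus\om)$ is geometrically bounded.

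Applying Theorem \ref{thm:action} to $N:=L$ gives $A(L)\leq\pi/2$. It remains to note that for a Lagrangian $L$ the quantity $A(L)$ as defined in (\ref{eq:A N}) coincides with the Lagrangian minimal action appearing in the corollary; this is exactly the content of the second item in the remark following (\ref{eq:A N}), since the isotropic leaves are the connected components of $L$, so $u(S^1)\sub F\in L_\om$ for some leaf $F$ is equivalent to $u(S^1)\sub L$. There is no real obstacle here: the corollary is a tautological specialization of Theorem \ref{thm:action}, and the only nontrivial point is the routine verification that Lagrangians qualify as regular closed coisotropic submanifolds.
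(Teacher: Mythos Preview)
Your proposal is correct and follows exactly the paper's approach: the paper's proof is the single line ``This follows from Theorem \ref{thm:action} and the fact that every Lagrangian submanifold is regular,'' and you have simply unpacked this. One minor remark: your paragraph on the geometric boundedness of $\CP^n\times M$ is unnecessary, since Theorem \ref{thm:action} only asks that $(M,\om)$ be geometrically bounded, which is already part of the hypothesis.
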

\begin{proof} This follows from Theorem \ref{thm:action} and the fact that every Lagrangian submanifold is regular.
\end{proof}
\begin{Rmk} By Corollary \ref{cor:action} there is no \emph{exact} Lagrangian submanifold of $\CP^n\x M$. By this we mean a Lagrangian submanifold with minimal action equal to $\pi$. 
\end{Rmk}
To explain a further application of Theorem \ref{thm:action}, recall that a presymplectic form on a manifold $M$ is a closed two-form $\om$ on $M$, such that 
\[\corank\om_x:=\dim(T_xM)^\om\]
does not depend on $x\in M$.%
\footnote{Here
\[(T_xM)^\om=\big\{v\in T_xM\,\big|\,\om(v,w)=0,\,\forall w\in T_xM\big\}.\]}%
 Let $\om$ be such a form. The set
\[TM^\om=\big\{(x,v)\,\big|\,x\in N,\,v\in T_xM^\om\big\}\sub TM\] 
is an involutive distribution on $M$. Hence by Frobenius' theorem it gives rise to a foliation on $M$. We denote by $M_\om$ the set of its leaves. We call $(M,\om)$ regular iff there exists a manifold structure on $M_\om$ for which the canonical projection $\pi:M\to M_\om$ is a smooth submersion.

A presymplectic embedding of a presymplectic manifold into another one is by definition a smooth embedding that intertwines the two presymplectic forms.
\begin{cor}[presymplectic embedding]\label{cor:presympl} Let $n\in\N$ and $(M,\om)$ be a geometrically bounded symplectic manifold, such that 
\begin{equation}\label{eq:int S 2}\int_{S^2}u^*\om\in\pi\Z,\quad\forall u\in C^\infty(S^2,M).
\end{equation}
Let $(M',\om')$ be a nonempty closed regular presymplectic manifold, such that every isotropic leaf of $M'$ is simply connected, and 
\begin{eqnarray}\label{eq:dim M' corank}&\dim M'+\corank\om'=2n+\dim M,&\\
\nn&\dim M'<2n.&
\end{eqnarray}
Then $(M',\om')$ does not presymplectically embed into the symplectic manifold $\big(\CP^n\x M,\om_\FS\oplus\om\big)$. 
\end{cor}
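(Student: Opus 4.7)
\textbf{Proof proposal for Corollary \ref{cor:presympl}.} The plan is to argue by contradiction: suppose that a presymplectic embedding $\phi:M'\to\CP^n\x M$ exists, and set $N:=\phi(M')$. My first step would be to verify that $N$ satisfies the hypotheses of Theorem \ref{thm:action}. Since $M'$ is closed and nonempty, so is $N$, and $\dim N=\dim M'<2n$. To check coisotropy, note that at $y=\phi(x)$ the symplectic complement $T_yN^{\om_\FS\oplus\om}$ has dimension $(2n+\dim M)-\dim M'=\corank\om'$ by the first equation in (\ref{eq:dim M' corank}); on the other hand, $\phi^*(\om_\FS\oplus\om)=\om'$ forces $d\phi(\ker\om'_x)\sub T_yN^{\om_\FS\oplus\om}$, and this subspace already has dimension $\corank\om'$, so equality holds and $T_yN^{\om_\FS\oplus\om}=d\phi(\ker\om'_x)\sub T_yN$. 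The same identification shows that $\phi$ carries the null foliation of $\om'$ leafwise diffeomorphically onto the characteristic foliation of $N$. Consequently the isotropic leaves of $N$ inherit simple connectedness from those of $M'$, and the bijection $M'_{\om'}\to N_\om$ transports the manifold structure from the former to the latter, making $N$ regular.

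Theorem \ref{thm:action} then provides a smooth map $u:\D\to\CP^n\x M$ and an isotropic leaf $F$ of $N$ with $u(S^1)\sub F$ and $0<a:=\int_\D u^*(\om_\FS\oplus\om)\leq\pi/2$. Since $F$ is simply connected, the loop $u|_{S^1}$ bounds a smooth disk $v:\D\to F$. After arranging that $u$ and $v$ agree to all orders on $S^1$ (by a standard collar modification that does not alter the integrals), I glue them along the common boundary to obtain a smooth sphere $w:S^2\to\CP^n\x M$. Because $F$ is isotropic, $v^*(\om_\FS\oplus\om)=0$, so $\int_{S^2}w^*(\om_\FS\oplus\om)=a$.

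The contradiction then comes from integrality. The cohomology class $[\om_\FS]$ equals $\pi$ times the positive integral generator of $H^2(\CP^n;\Z)$, so the integral of $(\pr_1\circ w)^*\om_\FS$ over $S^2$ lies in $\pi\Z$; by hypothesis (\ref{eq:int S 2}) the integral of $(\pr_2\circ w)^*\om$ lies in $\pi\Z$ as well. Summing, $a\in\pi\Z$, contradicting $0<a\leq\pi/2$.

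I expect the first step to be the main obstacle: one has to be careful to check that the manifold structure on $N_\om$ genuinely descends from that on $M'_{\om'}$, and that the isotropic leaves of $N$ are simply connected (rather than merely diffeomorphic to quotients of those of $M'$). The smoothing of $u$ and $v$ into a sphere is routine but deserves a brief justification; everything else is bookkeeping.
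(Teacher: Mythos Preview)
Your argument is correct and matches the paper's proof almost exactly: both assume a presymplectic embedding, verify that the image $N$ is coisotropic via the dimension hypothesis (the paper packages this as Lemma~\ref{le:W coiso}, you do the equivalent direct count), transport regularity and simple connectedness of leaves through the diffeomorphism $\phi$, cap off a disk with boundary in a leaf by a disk in the leaf to form a sphere, and contradict the $\pi\Z$-integrality of spherical periods of $\om_\FS\oplus\om$ against Theorem~\ref{thm:action}. One small imprecision: Theorem~\ref{thm:action} only asserts $A(N)\leq\pi/2$ as an infimum, so it does not literally hand you a disk with $a\leq\pi/2$; but it does give $a\in S(N)\cap(0,\pi)$, and that already contradicts $a\in\pi\Z$---the paper avoids this wrinkle by showing $S(N)\sub\pi\Z$ first, hence $A(N)\geq\pi$, and then invoking Theorem~\ref{thm:action}.
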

\begin{Xpl} Let $F$ be a simply-connected closed manifold of positive dimension and $(X,\si)$ a closed symplectic manifold. By Corollary \ref{cor:presympl} the presymplectic manifold $\big(X\x F,\si\oplus0\big)$ does not embed into $\big(\CP^n,\om_\FS\big)$, where $n:=\frac12\dim X+\dim F$.
\end{Xpl}
Corollary \ref{cor:presympl} will be proved in Section \ref{sec:proof:thm:action}. It has the following immediate application.
\begin{cor}[Lagrangian embedding]\label{cor:simply} Let $n\in\N$ and $(M,\om)$ be a geometrically bounded symplectic manifold such that (\ref{eq:int S 2}) holds and $\dim M<2n$. Then no simply-connected closed manifold embeds into $\CP^n\x M$ in a Lagrangian way.
\end{cor}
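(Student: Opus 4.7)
The plan is to reduce directly to Corollary~\ref{cor:presympl} by regarding a Lagrangian submanifold as a presymplectic manifold equipped with the identically vanishing two-form. I would argue by contradiction: suppose $L$ is a simply-connected closed manifold that embeds as a Lagrangian submanifold of $\CP^n\x M$. Equip $L$ with the presymplectic form $\om':=0$, and set $(M',\om'):=(L,0)$.

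Next, I would verify each hypothesis of Corollary~\ref{cor:presympl}. Since $\om'$ vanishes identically, $TL^{\om'}=TL$, so the induced foliation on $L$ has a single leaf, namely $L$ itself. This leaf is simply connected by assumption, and the quotient $L_{\om'}$ is a single point, which trivially endows $(L,0)$ with the structure of a regular presymplectic manifold. Since $L$ is Lagrangian we have $\dim L=\frac12(2n+\dim M)$, hence $\dim L+\corank\om' = 2\dim L = 2n+\dim M$, verifying (\ref{eq:dim M' corank}); and $\dim L = n+\frac12\dim M < 2n$ because $\dim M<2n$.

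Finally, I would note that the given Lagrangian embedding $L\hookrightarrow\CP^n\x M$ is automatically a presymplectic embedding of $(L,0)$ into $\big(\CP^n\x M,\om_\FS\oplus\om\big)$, since the pullback of $\om_\FS\oplus\om$ to a Lagrangian vanishes. Corollary~\ref{cor:presympl} forbids such an embedding, contradicting the existence of $L$. There is no serious obstacle here: the entire content is already in Corollary~\ref{cor:presympl}, and the only thing to check is the essentially tautological observation that a presymplectic manifold with zero form has a single isotropic leaf equal to the whole manifold.
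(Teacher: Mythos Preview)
Your proposal is correct and matches the paper's approach exactly: the paper states that Corollary~\ref{cor:simply} is an immediate application of Corollary~\ref{cor:presympl} and gives no further argument, and your verification of the hypotheses (zero form, single simply-connected leaf, the dimension count $2\dim L=2n+\dim M$, and $\dim L<2n$) is precisely the content of that implication.
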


\subsection{Related work}
V.~I.~Arnol'd observed in \cite{Ar} that the strict chord property for $(S^3,\al_0)$ follows from an elementary argument. In \cite[Corollary 1]{Giv} A.~B.~Givental' proved that there exists a Reeb chord between every pair of Legendrian submanifolds of $\RP^{2n-1}$ with the standard contact form, if they are isotopic via Legendrian submanifolds to the standard $\RP^{n-1}$. 

In \cite{ChCrit} Yu.~Chekanov provided lower bounds on the number of critical points of a quasi-function, i.e., a Legendrian submanifold of the 1-jet bundle of a manifold that is smoothly homotopic (via Legendrians) to the zero section. These points correspond to Reeb chords between the zero section and the Legendrian.

C.~Abbas \cite{AbNote,AbFinite,AbChord} proved the strict chord property for certain Legendrian knots in tight closed contact 3-manifolds. 

We say that a contact form $\al$ on a manifold $M$ has the \emph{chord property} iff every closed Legendrian intersects some characteristic for $\al$ at least twice or it intersects some closed characteristic (i.e., periodic Reeb orbit). Note that this property is trivially satisfied iff all characteristics are closed.

Consider now a contact manifold $(M,\xi)$ that arises as the boundary of a compact Stein manifold, and $\al$ a contact form on $M$ inducing $\xi$. In \cite[Theorem 2]{Mo} K.~Mohnke proved that $\al$ has the chord property. It follows that a nonempty closed Legendrian submanifold of $M$ admits a \emph{strict} Reeb chord if it does not intersect any closed characteristic for $\al$. Intuitively such Legendrian submanifolds are generic, provided that $\dim M\geq3$ and that $\al$ has only countably many closed Reeb orbits.

In \cite{Ci} K.~Cieliebak proved that Legendrian spheres in the boundaries of certain subcritical Weinstein domains intersect some characteristic for $\al$ at least twice.

Let $U\sub\R^{2n}$ be a bounded star-shaped domain with smooth boundary and $\emptyset\neq L\sub\dd U$ a closed Legendrian submanifold of nonpositive curvature. The last condition means that $L$ that admits a Riemannian metric of nonpositive sectional curvature. In the recent preprint \cite{CM} K.~Cieliebak and K.~Mohnke proved that $L$ possesses a Reeb chord of length bounded above by the (toroidal) Lagrangian capacity of $U$, see \cite[Corollary 1.12]{CM}. Using \cite[Corollary 1.3]{CM}, they deduced that $L$ admits a Reeb chord of length bounded above by $\pi/n$, if $n\geq2$, and $U=B^{2n}_1$, i.e., $\dd U$ is the unit sphere.

As explained in \cite{CM} after Corollary 1.13, it follows that there exists no exact Lagrangian embedding into $\CP^n$ of a closed manifold $\emptyset\neq X$ of nonpositive curvature. (Corollary \ref{cor:action} is a stabilized version of this without the nonpositive curvature assumption.)

K.~Cieliebak and K.~Mohnke also proved that for $S\sub\R^{2n}$ sufficiently $C^1$-close to the unit sphere, every closed Legendrian submanifold $\emptyset\neq L\sub S$ of nonpositive curvature possesses a strict Reeb chord, see \cite[Corollary 1.15]{CM}.

A powerful tool for finding Reeb chords is Legendrian contact homology. Based on work by Y.~Eliashberg, A.~Givental, H.~Hofer \cite{EGH} and Yu.~Chekanov \cite{ChDiff}, this homology was developed by F.~Bourgeois, T.~Ekholm, J.~B.~Etnyre, J.~Sabloff, M.~Sullivan, and others, see \cite{EESu,EESa,BEE,Ek} and references therein.

Using embedded contact homology, M.~Hutchings and C.~Taubes \cite{HT1,HT2} proved that every contact form on a closed 3-manifold has the chord property. Further results are contained in \cite{SaIt,SaMorse,Rit,Me}.

In \cite[Theorem 3.1]{Se} P.~Seidel proved that if a closed manifold $X$ embeds into $\CP^n$ in a Lagrangian way then $H^1\big(X,\Z/(2n+2)\big)\neq0$. In particular, $X$ is not simply-connected. Corollary \ref{cor:presympl} extends the latter statement to presymplectic embeddings into $\CP^n\x M$.

P.~Biran and K.~Cieliebak \cite{BCSympl,BCLag} generalized Seidel's result in various ways. In the case $\int_{S^2}u^*\om=0$, for every $u\in C^\infty(S^2,M)$, Corollary \ref{cor:simply} follows from their results. Further references about results on the topology of Lagrangian embeddings are provided in \cite{BCSympl,BCLag}.

\subsection{Acknowledgments}
I would like to thank the anonymous referee for his/ her useful comments. I would also like to thank K.~Cieliebak and K.~Mohnke for making me aware of the application regarding the minimal action of a Lagrangian submanifold of $\CP^n$ (Corollary \ref{cor:action}).
\section{Proof of Theorem \ref{thm:chord} (strict chord property)}
The proof of Theorem \ref{thm:chord} is based on the following construction. Let $M,\al,T,W,\lam,n,\phi$ be as in the hypothesis of Theorem \ref{thm:chord}, and $L\sub M$ a nonempty closed Legendrian submanifold. We construct a Lagrangian immersion in $\R^{2n}\x W$ by flowing $L$ with the Reeb flow. It will follow from Theorem \ref{thm:L} and Lemma \ref{le:X} below that this immersion is not injective. This means that $L$ admits a strict Reeb chord. 

We identify
\[S^1\iso\R/T\Z,\]
and denote by $R$ the Reeb vector field on $M$ \wrt $\al$, and by
\begin{equation}\label{eq:psi S 1}\psi:S^1\x M\to M\end{equation}
its flow. This map is welldefined, since by hypothesis all Reeb orbits of $\al$ are closed and of period $T$. We write
\begin{equation}\label{eq:wt W}\wt W:=\R^{2n}\x W,\quad\wt\lam:=\lam_0\oplus\lam,\quad\wt\om:=d\wt\lam,\end{equation}
and denote by $\iota:S^1\x L\to S^1\x M$ the inclusion. We define
\begin{equation}\label{eq:f}f:=\phi\circ\psi\circ\iota:S^1\x L\to\wt W.
\end{equation}
\begin{lemma}\label{le:Lag}The map $f$ is a Lagrangian immersion \wrt $\wt\om$.
\end{lemma}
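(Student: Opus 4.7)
The plan is to verify both properties directly: first that $f^*\wt\om=0$, and then that $df$ is injective pointwise, the dimensions being right so that these together give a Lagrangian immersion.

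For the isotropy, I would exploit the identity $f^*\wt\om=d(f^*\wt\lam)$ and reduce the computation to $M$. Since $\phi^*\wt\lam=\al$ by (\ref{eq:phi lam 0 lam}), one has $f^*\wt\lam=(\psi\circ\iota)^*\al$. The key ingredient is that the Reeb flow preserves $\al$: because $\LL_R\al=\iota_Rd\al+d(\iota_R\al)=0+d(1)=0$. Combined with $\al(R)\const1$, a one-line computation shows $\psi^*\al=dt+\pr_M^*\al$ on $S^1\x M$, where $dt$ denotes the standard form on $S^1\iso\R/T\Z$ and $\pr_M$ is the projection. Pulling back by $\iota$ and using that $L$ is Legendrian (so $\iota_L^*\al=0$, where $\iota_L:L\to M$ denotes the inclusion), I obtain $f^*\wt\lam=dt$ on $S^1\x L$. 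Differentiating, $f^*\wt\om=d(dt)=0$.

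For the immersion property, fix $(t,x)\in S^1\x L$ and suppose $df_{(t,x)}(s\dd_t+v)=0$ for some $s\in\R$ and $v\in T_xL$. Since $\phi$ is an embedding, its differential is injective, so $d(\psi\circ\iota)_{(t,x)}(s\dd_t+v)=sR(\psi_t(x))+(\psi_t)_*v=0$. Applying $(\psi_{-t})_*$ and using that $R$ is invariant under its own flow yields $sR(x)+v=0$. But $R(x)\notin T_xL$ (otherwise $\al_x(R(x))=0$, contradicting $\al(R)\const1$), so $s=0$ and then $v=0$. Hence $df_{(t,x)}$ is injective.

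Finally, a dimension count confirms the Lagrangian property: since $L$ is Legendrian in the $(2n+\dim W-1)$-dimensional contact manifold $M$, $\dim L=n+\frac12\dim W-1$, so $\dim(S^1\x L)=n+\frac12\dim W=\frac12\dim\wt W$. Combined with $f^*\wt\om=0$ and injectivity of $df$, this shows $f$ is a Lagrangian immersion.

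There is no real obstacle; the proof is essentially a computation. The only conceptual point is recognizing that the Legendrian condition kills the $\pr_M^*\al$ contribution, leaving only the exact piece $dt$, which then vanishes under $d$.
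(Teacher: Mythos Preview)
Your proof is correct and uses the same ingredients as the paper's: invariance of $\al$ under the Reeb flow, $\al(R)\const1$, and $TL\sub\ker\al$. The one noteworthy difference is that you compute the primitive explicitly, obtaining $f^*\wt\lam=dt$, whereas the paper argues directly with the two-form $\om=d\al$; your route is arguably cleaner and in fact anticipates the action computation in the paper's Claim~\ref{claim:A}, where exactly this identity $x^*\al=dz$ is derived.
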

\begin{proof}[Proof of Lemma \ref{le:Lag}] Since $\psi$ is the flow of $R$, we have
\begin{equation}\label{eq:d psi T S 1}d\psi(z,x)\big(T_zS^1\x\{0\}\big)=\R R_{\psi(z,x)},\quad\forall(z,x)\in S^1\x M.\end{equation}
We show that \textbf{$f$ is an immersion}. Since $L$ is Legendrian, we have $TL\sub\ker\al$. Since the Reeb flow $\psi_z:=\psi(z,\cdot)$ preserves $\al$, it preserves $\ker\al$, for every $z\in S^1$. It follows that $d\psi\big(\{0\}\x TL\big)\sub\ker\al$. Let $(z,x)\in S^1\x L$. Using (\ref{eq:d psi T S 1}) and the fact $\al(R)\const1$, it follows that
\begin{equation}\label{eq:d psi 0 TL}d\psi(z,x)\big(\{0\}\x T_xL\big)\cap d\psi(z,x)\big(T_zS^1\x\{0\}\big)=\{0\}.\end{equation}
Since $\psi$ is a flow, $d\psi_z(x)$ is injective. It follows from (\ref{eq:d psi T S 1}) and the fact $R\neq0$ that $d\big(\psi(\cdot,x)\big)(z)$ is injective. Combining this with (\ref{eq:d psi 0 TL}), it follows that
\[d\psi(z,x):T_{(z,x)}(S^1\x L)\to T_{\psi(z,x)}M\]
is injective. Using (\ref{eq:f}) and that $\phi$ is an immersion, it follows that the same holds for $f$, as claimed.\\

We show that $f$ is \textbf{isotropic}. We define $\om:=d\al$. The equalities (\ref{eq:phi lam 0 lam},\ref{eq:f}) and $\wt\om=d\wt\lam$ imply that
\[f^*\wt\om=d\iota^*\psi^*\phi^*\wt\lam=d\iota^*\psi^*\al=\iota^*\psi^*\om.\]
Therefore it suffices to show that $\psi^*\om$ vanishes on pairs of vectors in $T(S^1\x L)$ (over the same point). To see this, note that for every $z\in S^1$ the Reeb flow $\psi_z:M\to M$ preserves $\om$, since it preserves $\al$. Since $L$ is Legendrian, it is isotropic \wrt $\om$. It follows that $\psi^*\om$ vanishes on pairs of vectors in $\{0\}\x TL$.

The equalities (\ref{eq:d psi T S 1}) and $\R R=\ker\om$ imply that $\psi^*\om$ vanishes on each pair of vectors in $T(S^1\x L)$, of which at least one lies in $TS^1\x\{0\}$. It follows that $\psi^*\om$ vanishes on all pairs of vectors in $T(S^1\x L)$. This shows that $f$ is isotropic.\\

Equality (\ref{eq:dim M}) implies that the domain of $f$ has dimension equal to $\frac12\dim\wt W$. It follows that $f$ is a Lagrangian immersion, as claimed. This proves Lemma \ref{le:Lag}.
\end{proof}
The proof that the map $f$ is not injective, is based on the next result, which is due to Yu.~Chekanov. Let $(M,\om)$ be a symplectic manifold. We denote by $\HH(M,\om)$ the set of all functions $H\in C^\infty\big([0,1]\x M,\R\big)$ whose Hamiltonian time $t$ flow $\phi_H^t\colon M\to M$ exists and is surjective, for every $t\in[0,1]$.%
\footnote{The time $t$ flow of a time-dependent vector field on a manifold $M$ is always an injective smooth immersion on its domain of definition. Hence if it is everywhere well-defined and surjective then it is a diffeomorphism of $M$.}%
 We define the \emph{Hofer norm} 
\[\Vert\cdot\Vert\colon\HH(M,\om)\to[0,\infty],\quad\Vert H\Vert:=\int_0^1\left(\sup_MH^t-\inf_MH^t\right)dt,\]
and the \emph{displacement energy} of a subset $X\sub M$ to be 
\begin{eqnarray*}&e(X):=e(M,X):=e(M,\om,X):=&\\
&\inf\big\{\Vert H\Vert\,\big|\,H\in\HH(M,\om)\colon\phi_H^1(X)\cap X=\emptyset\big\}.\footnotemark&
\end{eqnarray*}
\footnotetext{Alternatively, one can define a displacement energy, using only functions $H$ with compact support. However, it seems more natural to allow for all functions in $\HH(M,\om)$.}
Let $L\sub M$ be a Lagrangian submanifold. We denote by $\D\sub\R^2$ the closed unit disk. The \emph{minimal symplectic action (or area) of $L$} is defined to be
\begin{eqnarray*}&A(M,\om,L):=&\\
&\inf\left(\left\{\displaystyle\int_\D u^*\om\,\big|\,u\in C^\infty(\D,M):\,u(S^1)\sub L\right\}\cap(0,\infty)\right)\in[0,\infty].&\end{eqnarray*}
\begin{thm}[displacement energy of a Lagrangian]\label{thm:L} If $(M,\om)$ is geometrically bounded and $L$ is closed then 
\[e(L)\geq A(L).\]
\end{thm}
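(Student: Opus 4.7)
The plan is to reduce the displacement-energy estimate to the construction of a non-constant smooth map $u:(\D,\dd\D)\to(M,L)$ whose symplectic area satisfies $0<\int_\D u^*\om\leq\|H\|$, for every $H\in\HH(M,\om)$ with $\phi_H^1(L)\cap L=\emptyset$. Taking the infimum over such $H$ and invoking the definition of $A(L)$ then yields $e(L)\geq A(L)$ at once.

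For the construction I would set up a Floer-type continuation argument. First, fix an $\om$-compatible almost complex structure $J$ witnessing geometric boundedness, so that the metric $\om(\cdot,J\cdot)$ is complete with bounded sectional curvature and positive injectivity radius; this makes $J$ uniformly tame and activates Sikorav's monotonicity lemma for $J$-holomorphic curves. Next, on the strip $\R\x[0,1]$ consider the perturbed Cauchy--Riemann equation
\[\dd_su+J(u)\big(\dd_tu-\beta(s)X_{H_t}(u)\big)=0,\qquad u(\cdot,0),\,u(\cdot,1)\in L,\]
with $\beta:\R\to[0,1]$ a smooth cutoff vanishing for $s\leq 0$ and equal to $1$ for $s\geq 1$. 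The standard energy identity bounds the energy of any finite-energy solution by $\|H\|$; such a solution would have asymptotic limits at $s=-\infty$ in $L$ and at $s=+\infty$ in $L\cap\phi_H^1(L)$, which is empty by hypothesis.

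I would then run a homotopy argument: at $\beta\equiv 0$ the moduli space of finite-energy solutions is non-empty (constant maps to points of $L$), whereas at the chosen $\beta$ it is empty. Deforming $\beta$ along a smooth one-parameter family therefore forces a sequence of solutions to degenerate, and by the Lagrangian bubbling analysis the only degeneration compatible with the boundary condition $L$ is the formation of a non-constant $J$-holomorphic disk with boundary on $L$ (sphere bubbles being handled by a standard monotonicity or index argument). The energy bound transfers to the bubble, producing a map in $C^\infty(\D,M)$ with boundary on $L$ whose area is strictly positive and at most $\|H\|$.

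The main obstacle I anticipate is compactness in the non-compact setting: a priori the family of solutions could escape to infinity, and without confinement the bubbling analysis breaks down. This is exactly where geometric boundedness is indispensable—the monotonicity lemma combined with the uniform energy bound $\|H\|$ and the compactness of the boundary anchor $L$ force every solution of the perturbed equation to remain in a fixed compact neighborhood of $L$. Once confinement is secured, Gromov compactness produces the required disk bubble and the inequality $e(L)\geq A(L)$ follows.
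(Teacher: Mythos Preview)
The paper's proof is a single sentence: ``This follows from the Main Theorem in \cite{ChLag}.'' You are therefore not reproducing the paper's argument but sketching Chekanov's own proof, and your outline is essentially faithful to his strategy: Floer strips with Lagrangian boundary conditions, a homotopy of Hamiltonian perturbations, confinement of solutions via geometric boundedness and the monotonicity lemma, and extraction of a holomorphic disk of area at most $\Vert H\Vert$ when compactness fails along the homotopy. Two details in your sketch would need tightening. At $\beta\equiv0$ the moduli space of finite-energy solutions is all of $L$, not a finite set, so a point constraint (or an evaluation-map degree argument) is required to run the cobordism in the correct dimension. And sphere bubbles are not dismissed by an index or monotonicity argument; rather they are harmless because a bubbled sphere can be connected to $L$ along the limiting strip and reinterpreted as a disk with constant boundary on a point of $L$, hence still contributes to $A(L)$. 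These are standard refinements carried out in \cite{ChLag}. For the paper's purposes the citation suffices; your route buys self-containment at the cost of substantial Floer-theoretic machinery.
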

\begin{proof}[Proof]\setcounter{claim}{0} This follows from the Main Theorem in \cite{ChLag}.
\end{proof}
Another key ingredient in the proof that $f$ (defined as in (\ref{eq:f})) is not injective, is the following lemma. Let $n\in\N$. We denote by $\om_0$ the standard symplectic form on $\R^{2n}$.
\begin{lemma}[bound on displacement energy]\label{le:X} Let $a>0$ and $X$ be a compact subset of the closed ball $\BAR B^{2n}(a)$, which does not contain $S^{2n-1}(a)$, the sphere in $\R^{2n}$ of radius $\sqrt{a/\pi}$. Then 
\begin{equation}\label{eq:e X}e\big(\R^{2n},X\big)<a.\end{equation}
\end{lemma}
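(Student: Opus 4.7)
Since $X$ is compact and $X\not\cont S^{2n-1}(a)$, pick $p\in S^{2n-1}(a)\wo X$; by compactness there is $\rho>0$ with $X\cap B(p,\rho)=\emptyset$. After a unitary change of coordinates (a symplectomorphism of $\R^{2n}$) one may assume $p=(r,0,\dots,0)$ with $r:=\sqrt{a/\pi}$, so that
\[D_\eta:=\BAR B^{2n}(a)\cap\{q^1\leq r-\eta\}\cont X\]
for some $\eta=\eta(\rho)>0$. By monotonicity of $e$ it suffices to prove $e(\R^{2n},D_\eta)<a$.

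I would proceed by building a compactly supported Hamiltonian symplectomorphism $\psi$ of $\R^{2n}$ with $\psi(D_\eta)\sub\BAR B^{2n}(a-\delta)$ for some $\delta=\delta(\eta)>0$. Intuitively, the missing cap $\{q^1\geq r-\eta\}\cap\BAR B^{2n}(a)$ strictly shrinks the symplectic size of $D_\eta$ relative to $\BAR B^{2n}(a)$; moreover, the cap is cut out by a condition on the first $\C$-coordinate only, which suggests that the compression can be done essentially in the $(q^1,p_1)$-plane. Granted such a $\psi$, the Hamiltonian-conjugation invariance of $e$ together with the standard bound $e(\R^{2n},\BAR B^{2n}(a-\delta))\leq a-\delta$ (proved by an explicit compactly supported displacing Hamiltonian) gives
\[e(\R^{2n},D_\eta)=e(\R^{2n},\psi(D_\eta))\leq e(\R^{2n},\BAR B^{2n}(a-\delta))\leq a-\delta<a.\]

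The main obstacle is actually constructing $\psi$ in dimension $2n\geq 4$, where symplectic rigidity prevents naive compressions. The simplest candidate -- an area-preserving isotopy of the $(q^1,p_1)$-plane flattening the disk-minus-cap into a slightly smaller disk, extended by the identity in the remaining coordinates -- fails, since at $(q^1,p_1)=0$ the other coordinates can already fill out a Euclidean ball of radius $r$ in $\R^{2n-2}$, so the product map does not reduce the total Euclidean norm. A correct $\psi$ must mix the $\C$-planes, trading the $(q^1,p_1)$-area saving for room in the orthogonal radii. As a fallback avoiding $\psi$ entirely, one may construct a displacing Hamiltonian for $D_\eta$ directly, as a concatenation of a small-Hofer-norm push that exploits the cap-gap with a displacement of a slightly smaller ball; tuning the concatenation so that the total Hofer norm is strictly less than $a$ is then the key technical step.
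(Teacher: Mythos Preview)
Your reduction is correct and matches the paper: rotate so the missing sphere point is $(r,0,\ldots,0)$ and pass by monotonicity to $D_\eta=\BAR B^{2n}(a)\cap\{q^1\leq r-\eta\}$. The gap is in what you do next. You try to symplectically compress $D_\eta$ into a strictly smaller ball $\BAR B^{2n}(a-\delta)$, correctly observe that the obvious $(q^1,p_1)$-plane move does not achieve this, and then leave the construction (or the fallback concatenation) unfinished. This is not a complete proof.

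The paper sidesteps the compression problem entirely. The projection of $D_\eta$ to the $(q^1,p_1)$-plane lands in the convex region $Y_c:=\{(q,p)\in\BAR B^2_r:\,q\leq c\}$ for some $c<r$, so $D_\eta\sub Y_c\times\R^{2n-2}$. Now one does not aim for a smaller ball at all: a Hamiltonian $H=H(t,q^1,p_1)$ depending only on the first pair of coordinates that displaces $Y_c$ in $\R^2$ already displaces the whole cylinder $Y_c\times\R^{2n-2}$ in $\R^{2n}$, with the same Hofer norm. Hence
\[e\big(\R^{2n},D_\eta\big)\leq e\big(\R^{2n},Y_c\times\R^{2n-2}\big)\leq e\big(\R^2,Y_c\big)=\area(Y_c)<\pi r^2=a,\]
where the equality $e(\R^2,Y_c)=\area(Y_c)$ follows from a Moser argument (or an explicit construction), since $Y_c$ is a convex planar region. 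In short, your ``naive'' first-factor idea was the right one, but pointed at the wrong target: use it to \emph{displace} the cylinder directly, not to squeeze into a smaller ball.
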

\begin{proof}[Proof of Lemma \ref{le:X}] \WLOG we may assume that $a=\pi$. Since $X$ does not contain $S^{2n-1}$, there exists an orthogonal linear symplectic map $\Psi\colon\R^{2n}\to\R^{2n}$, such that $(1,0,\ldots,0)\not\in\Psi(X)$. We denote
\[Y_c:=\big\{(q,p)\in\D\mid q\leq c\big\}.\]
Since $\Psi(X)$ is compact and contained in $\BAR B^{2n}$, there exists $c<1$, such that
\begin{equation}\label{eq:phi X}\Psi(X)\sub Y_c\x\R^{2n-2}.
\end{equation}
We have
\begin{align*}e\big(\R^{2n},X\big)&=e\big(\R^{2n},\Psi(X)\big)\\
&\leq e\big(\R^{2n},Y_c\x\R^{2n-2}\big)\\
&\leq e\big(\R^2,Y_c\big)\\
&=\area(Y_c)\\
&<\pi\\
&=a.
\end{align*}
The fourth step follows from a concrete construction of a Hamiltonian diffeomorphism that displaces $Y_c$ or from a Moser type argument. This proves (\ref{eq:e X}) and hence Lemma \ref{le:X}.
\end{proof}
\begin{proof}[Proof of Theorem \ref{thm:chord}] Let $M,\al,T,W,\lam,n,\phi$ be as in the hypothesis of this theorem, and $L\sub M$ a nonempty closed Legendrian submanifold. By hypothesis all characteristics for $\al$ are closed, i.e., all Reeb orbits are periodic. Furthermore, their periods are all equal to $T$. We identify $S^1\iso\R/T\Z$ and define $\psi,\wt W,\wt\lam,\wt\om,f$ as in (\ref{eq:psi S 1},\ref{eq:wt W},\ref{eq:f}).
\begin{claim}\label{claim:f} The map $f$ is not injective.
\end{claim}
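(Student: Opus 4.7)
The plan is to argue by contradiction: assume $f$ is injective. Then $f$ is an embedding, since it is an injective immersion (by Lemma~\ref{le:Lag}) from the compact manifold $S^1\x L$, so $L':=f(S^1\x L)$ is a closed Lagrangian submanifold of $(\wt W,\wt\om)$. I will extract a contradiction by producing two incompatible bounds on the displacement energy $e(\wt W,L')$: a lower bound of $T$ coming from Theorem~\ref{thm:L}, and a strict upper bound of $T$ coming from Lemma~\ref{le:X} applied after projection to the $\R^{2n}$-factor.

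For the lower bound, the key step is to compute $f^*\wt\lam=dt$, where $dt$ denotes the length form on $S^1\iso\R/T\Z$. Indeed, (\ref{eq:phi lam 0 lam}) gives $f^*\wt\lam=\iota^*\psi^*\al$; the Reeb-flow properties $\al(R)\const1$ and $\psi_z^*\al=\al$ yield $\psi^*\al=dt+\al$ on $S^1\x M$ (with $\al$ understood as pulled back via the projection onto $M$); and the Legendrian condition $\al|_{TL}=0$ kills the second summand along $\iota$. Since $f$ is a diffeomorphism onto $L'$, the boundary of any smooth disk $u\colon\D\to\wt W$ with $u(S^1)\sub L'$ lifts uniquely to a loop $\wt u\colon S^1\to S^1\x L$, and Stokes gives $\int_\D u^*\wt\om=\int_{S^1}\wt u^*(dt)\in T\Z$. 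Hence $A(L')\geq T$, and Theorem~\ref{thm:L} yields $e(\wt W,L')\geq T$.

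For the upper bound, let $\pr\colon\wt W\to\R^{2n}$ denote projection to the first factor and set $X:=\pr(L')$. By (\ref{eq:phi M}) we have $X\sub\BAR B^{2n}(T)$, and the hypothesis $n\geq\frac12\dim W+2$ forces $\dim(S^1\x L)=n+\frac12\dim W\leq 2n-2$, so $X$, being the smooth image of a compact manifold of dimension at most $2n-2$, has vanishing $(2n-1)$-dimensional Hausdorff measure and in particular cannot contain the sphere $S^{2n-1}(T)$. Lemma~\ref{le:X} then provides $e(\R^{2n},X)<T$. Any displacing Hamiltonian $H$ for $X$ in $\R^{2n}$ pulls back via $\pr$ to $\wt H:=H\circ\pr$ on $\wt W$ whose flow is $\phi_H^t\x\id_W$; this displaces $X\x W\cont L'$ with unchanged Hofer norm, giving $e(\wt W,L')\leq e(\R^{2n},X)<T$, contradicting the lower bound.

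The main obstacle is the identity $f^*\wt\lam=dt$ together with the resulting quantization $A(L')\geq T$, which depends on $f$ being a diffeomorphism onto $L'$ so that disk boundaries lift uniquely to $S^1\x L$; the remaining pieces, namely the dimension count ruling out $X\cont S^{2n-1}(T)$ and the product lift of the displacing Hamiltonian, are routine once this is in place.
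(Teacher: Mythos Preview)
Your argument is correct and follows essentially the same route as the paper: assume injectivity, so $\wt L:=f(S^1\x L)$ is a closed Lagrangian; bound $e(\wt W,\wt L)<T$ via the dimension count and Lemma~\ref{le:X} after projecting to $\R^{2n}$; and bound $e(\wt W,\wt L)\geq A(\wt L)\geq T$ via Theorem~\ref{thm:L} together with the computation $f^*\wt\lam=dt$, which forces disk integrals into $T\Z$. The only cosmetic differences are that the paper invokes Sard's theorem rather than a Hausdorff-measure argument to rule out $S^{2n-1}(T)\sub\pr_1(\wt L)$, organizes the action computation by first pulling back through $\phi^{-1}$ and then $\psi^{-1}$ rather than computing $f^*\wt\lam$ in one step, and explicitly records that $(\wt W,\wt\om)$ is geometrically bounded before applying Theorem~\ref{thm:L}.
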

\begin{proof}[Proof of Claim \ref{claim:f}] We denote 
\[\wt L:=f\big(S^1\x L\big),\]
and by
\[\pr_1:\wt W\to\R^{2n}\]
the projection onto the first factor. The hypotheses (\ref{eq:dim M}) and $n\geq\frac12\dim W+2$ imply that
\[\dim(S^1\x L)=1+\frac{\dim M-1}2=n+\frac12\dim W\leq2n-2.\]
Hence by Sard's theorem, it follows that 
\[S^{2n-1}(T)\not\sub\pr_1\circ f(S^1\x L)=\pr_1(\wt L).\]
On the other hand, hypothesis (\ref{eq:phi M}) implies that
\[\pr_1(\wt L)\sub\BAR B^{2n}(T).\]
Therefore, applying Lemma \ref{le:X}, we have
\begin{equation}\label{eq:e T al}e\big(\wt W,\wt L\big)\leq e\big(\R^{2n},\pr_1(\wt L)\big)<T.
\end{equation}
\textbf{Assume now by contradiction that $f$ was injective.} This map is proper, since its domain is compact. Hence it follows from Lemma \ref{le:Lag} that $f$ is a Lagrangian embedding. Since $(\R^{2n},\om_0)$ and $(W,\om)$ are geometrically bounded, the same holds for $(\wt W,\wt\om)$. Therefore, Theorem \ref{thm:L} implies that
\[e\big(\wt W,\wt L\big)\geq A\big(\wt W,\wt L\big).\]
Combining this inequality with (\ref{eq:e T al}) and the next claim, we arrive at a contradiction.
\begin{claim}\label{claim:A} We have
\begin{equation}\label{eq:A T}A\big(\wt W,\wt L\big)\geq T.
\end{equation}
\end{claim}
\begin{pf}[Proof of Claim \ref{claim:A}] Let $\wt u\in C^\infty(\D,\wt W)$ be such that
\[\wt u(S^1=\dd\D)\sub\wt L=f\big((\R/T\Z)\x L\big).\]\
We show that 
\begin{equation}\label{eq:int D}\int_\D\wt u^*\wt\om\in T\Z.
\end{equation}
We define
\[x:=\phi^{-1}\circ\wt u:S^1\to\psi\big((\R/T\Z)\x L\big)\sub M.\]
(Recall that $\phi:M\to\wt W$ is the given embedding.) The equality $\wt\om=d\wt\lam$, Stokes' Theorem, and the hypothesis (\ref{eq:phi lam 0 lam}) imply that
\begin{equation}\label{eq:int D S 1}\int_\D\wt u^*\wt\om=\int_{S^1}(\phi\circ x)^*\wt\lam=\int_{S^1}x^*\al.\end{equation}
We define
\[(z,y):=\psi^{-1}\circ x:S^1\to(\R/T\Z)\x L.\]
This makes sense, since the restriction of $\psi$ to $(\R/T\Z)\x L$ is injective. (Here we use our assumption that $f$ is injective.) Since $x=\psi\circ(z,y)$, we have
\[x^*\al=\al\Big(R\circ\psi\circ(z,y)dz+d\psi_z(y)dy\Big)=dz+\al dy=dz.\]
Here we view $dz$ as a real-valued one-form on $S^1$. In the second equality we used that $\al(R)\const1$ and that the Reeb flow $\psi$ preserves $\al$. In the last equality we used that $L$ is Legendrian, and hence $\al\big|_{TL}=0$. It follows that
\[\int_{S^1}x^*\al=\int_{S^1}dz=T\deg(z).\]
Using (\ref{eq:int D S 1}), this proves (\ref{eq:A T}), i.e., Claim \ref{claim:A}, 
\end{pf}
and therefore Claim \ref{claim:f}.
\end{proof}
By Claim \ref{claim:f} there exist distinct points $(z_i,x_i)\in S^1\x L=(\R/T\Z)\x L$, $i=0,1$, such that
\begin{equation}\label{eq:f z 0}f(z_0,x_0)=f(z_1,x_1).\end{equation}
Recalling the definition (\ref{eq:f}), our hypothesis that the period of every characteristic equals $T$, implies that the map $f(\cdot,x_0)$ is injective. It follows that $x_0\neq x_1$. Using (\ref{eq:f},\ref{eq:f z 0}), it follows that these two points lie on the same characteristic for $\al$. Hence $L$ intersects this characteristic at least twice. This proves Theorem \ref{thm:chord}.
\end{proof}
\begin{Rmk}The above proof relies on the sharp bound for the displacement energy of a closed Lagrangian submanifold due to Yu.~Chekanov \cite{ChLag}. The same result was used by K.~Mohnke \cite{Mo} and later by K.~Cieliebak and K.~Mohnke \cite[Corollaries 1.4 and 1.5]{CM} to find Reeb chords. The construction of the closed Lagrangian submanifold in the proof of Theorem \ref{thm:chord} is a variation on the construction in \cite{Mo,CM}.

A new feature is that here the Reeb flow alone is used to produce a Lagrangian submanifold, whereas in \cite{Mo,CM} both the Reeb flow and the Liouville flow are used. The new approach works because of the upper bound on the displacement energy of a compact subset of a ball given in Lemma \ref{le:X}.
\end{Rmk}
\section{Proof of Proposition \ref{prop:M al} (contact form)}
The proof of Proposition \ref{prop:M al} is based on the following result. Let $(W,\rho,\lam)$ be an exact Hamiltonian $S^1$-manifold, and $c\in\R\wo\{0\}$. We denote by $X$ the vector field generated by $\rho$ and define
\[H:=\lam(X):W\to\R,\quad M:=H^{-1}(c)\sub W.\]
We denote by
\[\iota:M\to W\]
the inclusion.
\begin{prop}\label{prop:Ham} The set $M$ is a hypersurface in $W$, $\al:=\iota^*\lam$ is a contact form on $M$, and all characteristics of $\al$ are closed. Their periods are equal to $2\pi c$ if the restriction of the action $\rho$ to $M$ is free.
\end{prop}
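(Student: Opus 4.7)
The plan is to use $\rho$-invariance of $\lam$ to recognize $H$ as a Hamiltonian for $X$ with respect to $d\lam$, and then deduce all three assertions from the resulting identity $\iota_Xd\lam=-dH$.

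First, I would apply Cartan's magic formula to $L_X\lam=0$ (which holds because $\rho$ preserves $\lam$) to obtain $\iota_Xd\lam=-d(\iota_X\lam)=-dH$. From this I draw two immediate consequences. Contracting once more with $X$ gives $dH(X)=0$, so $X$ is tangent to the level sets of $H$. Conversely, if $dH_x=0$ at some $x\in W$, then $\iota_{X(x)}d\lam=0$, and non-degeneracy of $d\lam$ forces $X(x)=0$, hence $H(x)=\lam_x(X(x))=0$. Since $c\neq 0$, $dH$ is nowhere zero on $M=H^{-1}(c)$, so $M\sub W$ is a smooth hypersurface on which $X$ restricts to a nowhere-vanishing tangent vector field.

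Second, to verify that $\al=\iota^*\lam$ is a contact form I would check the equivalent condition that at each $x\in M$ the kernel of $d\al|_{T_xM}$ is one-dimensional and $\al$ is nonzero on it. Restricting $\iota_Xd\lam=-dH$ to $T_xM=\ker dH_x$ gives $\iota_Xd\al=0$, so $\R X\sub\ker(d\al|_{T_xM})$. Conversely, if $v\in T_xM$ satisfies $\iota_v d\lam|_{T_xM}=0$, then $\iota_vd\lam=\mu\,dH_x$ for some $\mu\in\R$; combined with $\iota_{\mu X(x)}d\lam=-\mu\,dH_x$ this gives $\iota_{v+\mu X(x)}d\lam=0$, and non-degeneracy forces $v=-\mu X(x)$. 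Hence $\ker(d\al|_{T_xM})=\R X(x)$, while $\al(X)=\lam(X)=H\const c\neq 0$ on $M$, so $\al$ is contact and its Reeb vector field is $R=X/c$.

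Third, since $X$ is the infinitesimal generator of the $S^1$-action, its flow is $2\pi$-periodic, and hence every integral curve of $R=X/c$ is closed; this gives the closed characteristics of $\al$. If moreover $\rho|_M$ is free, then for each $x\in M$ the loop $\ga(t):=\rho(e^{it})x$, $t\in[0,2\pi]$, parametrizes the characteristic through $x$ exactly once, with $\dot\ga=X\circ\ga$. Therefore
\[\int_\ga\al=\int_0^{2\pi}\al\big(X(\ga(t))\big)\,dt=\int_0^{2\pi}H(\ga(t))\,dt=2\pi c,\]
so the period equals $|2\pi c|$. I do not anticipate a serious obstacle; the only slightly delicate step is the contact condition, which reduces to the short linear-algebra argument above identifying $\ker(d\al|_{T_xM})$ with $\R X(x)$, and the remaining assertions follow directly from the Cartan-formula identity.
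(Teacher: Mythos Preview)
Your argument is correct and reaches the same endpoint as the paper, but the route to the contact condition is genuinely different. You verify it by a direct linear-algebra computation of $\ker(d\al|_{T_xM})$, showing it equals $\R X(x)$ and that $\al(X)=c\neq0$. The paper instead introduces the Liouville vector field $V$ (defined by $\iota_V\om=\lam$), computes $dH\cdot V=H$ via Cartan's formula, and concludes that $V$ is transverse to $M=H^{-1}(c)$; this immediately yields that $\iota^*\lam$ is contact by the standard ``hypersurface of contact type'' criterion. Your approach is more hands-on and self-contained, while the paper's is more structural and explains \emph{why} the contact form arises, namely because $M$ is a contact-type hypersurface for the Liouville structure $\lam$. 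Both proofs then identify $X=cR$ and compute the period in essentially the same way; your observation that the period is $|2\pi c|$ is in fact slightly more careful than the paper's, which writes $2\pi c$ despite allowing $c\in\R\wo\{0\}$.
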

\begin{proof}[Proof of Proposition \ref{prop:Ham}]\setcounter{claim}{0} By hypothesis the form 
\[\om:=d\lam\]
is nondegenerate, i.e., symplectic. We denote by $V$ the Liouville vector field on $W$ \wrt $\lam$. This is the unique vector field satisfying
\[\iota_V\om=\lam.\]
We have
\begin{align*}dH\cdot V&=\iota_Vd\iota_X\lam\\
&=\iota_V\LL_X\lam-\iota_V\iota_Xd\lam\\
&=0+\iota_X\iota_V\om\\
&=\iota_X\lam\\
&=H.
\end{align*}
Here in the second line we used Cartan's formula, and in the third line we used our hypothesis that $\lam$ is $\rho$-invariant. It follows that $dH\cdot V\const c\neq0$ along $M=H^{-1}(c)$. Hence $c$ is a regular value for $H$, $M$ is a hypersurface in $W$, and the Liouville vector field $V$ is transverse to $M$. It follows that $\al:=\iota^*\lam$ is a contact form on $M$. By the next claim its characteristics are closed.

\begin{claim}\label{claim:char}The characteristics of $\al$ are the orbits of the restriction of $\rho$ to $M$. 
\end{claim}
\begin{proof}[Proof of Claim \ref{claim:char}] It suffices to show that $X$ is $c$ times the Reeb vector field of $\al$. To see this, note that $X$ is tangent to $M$, since
\[dH\cdot X=\iota_Xd\iota_X\lam=\iota_X\LL_X\lam-\iota_X\iota_Xd\lam=0-0.\]
By definition, we have
\[\al(X)=\lam(X)\const c\textrm{ on }M=H^{-1}(c).\]
Finally,
\[\iota_Xd\al=\iota_Xd\lam=\LL_X\lam-d\iota_X\lam=0-dH=0\textrm{ on }TM.\]
It follows that $X$ equals $c$ times the Reeb vector field of $\al$. This proves Claim \ref{claim:char}.
\end{proof}

Assume now that the restriction of $\rho$ to $M$ is free. Let $C$ be a characteristic for $\al$. We choose $x_0\in C$ and define
\begin{equation}\label{eq:phi S1}\phi:S^1\to C,\quad\phi(z):=\rho(z,x_0).\end{equation}
This is a diffeomorphism, since the restriction of $\rho$ to $C$ is free. We denote by $\iota:C\to M$ the inclusion and by $\ga$ the standard angular one-form on $S^1$, whose integral equals $2\pi$. We have
\[\phi^*\iota^*\al=(\iota\circ\phi)^*\al=\big(\lam(X)\circ\phi\big)\ga=(H\circ\phi)\ga=c\ga.\]
Here in the second step we used the fact that $X$ generates the action $\rho$, and (\ref{eq:phi S1}). It follows that
\[\int_C\iota^*\al=\int_{S^1}\phi^*\iota^*\al=2\pi c.\]
Here we equipped $C$ with the orientation induced by the standard orientation on $S^1$ and the map $\phi$. This proves Proposition \ref{prop:Ham}.
\end{proof}
\begin{proof}[Proof of Proposition \ref{prop:M al}] We denote by $\rho_0$ the standard diagonal $S^1$-action on $\R^{2n}=\C^n$, given by
\[\rho_0(z)z_0:=zz_0=\big(zz_0^1,\ldots,zz_0^n\big).\]
By $\rho_0\x\rho$ we denote the product $S^1$-action on $\R^{2n}\x W$. We define $H_0,H$ as in (\ref{eq:H 0},\ref{eq:H}). The triple
\[\big(\wt W,\wt\rho,\wt\lam\big):=\Big(\big(\BAR B^{2n}(2\pi c)\wo\{0\}\big)\x W,\big(\rho_0\x\rho\big)|_{\wt W},\lam_0\oplus\lam\Big)\]
is an exact Hamiltonian $S^1$-action, and 
\[\wt H:=H_0\oplus H=\iota_{\wt X}\wt\lam:\wt W\to\R,\]
where $\wt X$ denotes the vector field generated by $\wt\rho$. The set $M$ defined in (\ref{eq:M}), is given by
\[M=\wt H^{-1}(c).\]
Since the restriction of $\rho_0$ to $\BAR B^{2n}(2\pi c)\wo\{0\}$ is free, the action $\wt\rho$ is free. Therefore, by Proposition \ref{prop:Ham} $M$ is a hypersurface in $\wt W$, and $\al:=\iota^*\wt\lam$ is a contact form on $M$ all of whose characteristics are closed and of period $2\pi c$. This proves Proposition \ref{prop:M al}.
\end{proof}
\section{Proof of Theorem \ref{thm:action} (minimal action) and of Corollary \ref{cor:presympl} (presymplectic embedding)}\label{sec:proof:thm:action}
In this section we denote by
\[B^n_r,\quad\BAR B^n_r,\quad S^{n-1}_r\]
the open and closed balls around 0 in $\R^n$ of radius $r$, and the sphere around 0 in $\R^n$ of radius $r$.\\

The proof of Theorem \ref{thm:action} is based on Lemma \ref{le:X} (bound on displacement energy) and the following. Let $(M,\om)$ be a symplectic manifold.
\begin{thm}\label{thm:leafwise} Assume that $(M,\om)$ is geometrically bounded. Let $N\sub M$ be a closed, regular coisotropic submanifold. Then 
\[e(N)\geq A(N).\]
\end{thm}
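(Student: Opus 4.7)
The plan is to reduce this coisotropic bound to the Lagrangian version (Theorem \ref{thm:L}) via the graph of the leaf projection. By regularity of $N$, the leaf space $N_\om$ carries a unique smooth manifold structure together with a symplectic form $\om_N$ satisfying $\pi^*\om_N=\iota^*\om$, where $\pi\colon N\to N_\om$ is the canonical projection. Since $N$ is closed and $\pi$ is continuous, $N_\om=\pi(N)$ is a closed symplectic manifold, hence geometrically bounded. I would form
\[\big(\wt M,\wt\om\big):=\big(M\x N_\om,\,\om\oplus(-\om_N)\big),\qquad \wt L:=\big\{(x,\pi(x))\,\big|\,x\in N\big\}\sub\wt M,\]
and check, using $\pi^*\om_N=\iota^*\om$ and $\dim\wt L=\dim N=\tfrac12\dim\wt M$, that $\wt L$ is a closed embedded Lagrangian submanifold of the geometrically bounded manifold $\wt M$.

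For the displacement-energy comparison, take $H\in\HH(M,\om)$ with $\phi_H^1(N)\cap N=\emptyset$ and lift it to $\wt H(t,p,q):=H(t,p)$ on $\wt M$. Its flow is $(\phi_H^t,\id)$, and since $\phi_H^1(x)\not\in N$ for every $x\in N$, certainly no $x\in N$ satisfies $\phi_H^1(x)\in N$ in the same isotropic leaf as $x$. Hence $\wt\phi^1(\wt L)\cap\wt L=\emptyset$, and $\|\wt H\|=\|H\|$ yields $e(\wt L)\leq e(N)$. Applying Theorem \ref{thm:L} gives $e(\wt L)\geq A(\wt L)$, so it suffices to prove $A(\wt L)\geq A(N)$.

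For the action comparison, let $\wt u=(u_1,u_2)\colon\D\to\wt M$ with $\wt u(S^1)\sub\wt L$, so $u_1(S^1)\sub N$ and $u_2|_{S^1}=\pi\circ u_1|_{S^1}$; then
\[\int_\D\wt u^*\wt\om=\int_\D u_1^*\om-\int_\D u_2^*\om_N.\]
Lift $u_2$ to $\hat u\colon\D\to N$ with $\pi\circ\hat u=u_2$ and $\hat u|_{S^1}=u_1|_{S^1}$. The identity $\pi^*\om_N=\iota^*\om$ rewrites the second summand as $\int_\D\hat u^*\om$, so $\int_\D\wt u^*\wt\om$ equals the symplectic area of the sphere $v\colon S^2\to M$ obtained by gluing $u_1$ with a reversed copy of $\hat u$ along their common boundary. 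Any spherical $\om$-period lies in $S(N)$: homotope $v$ so that its south pole lands on a point $p\in N$, and view $v$ as a disk $\D\to M$ with boundary collapsed to $\{p\}$, which is contained in the leaf through $p$. This gives $A(\wt L)\geq A(N)$ and completes the reduction.

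The hard part is the lifting step $u_2\mapsto\hat u$ with prescribed boundary. Existence of such a lift requires an appropriate homotopy-lifting property for the submersion $\pi$, which holds cleanly when the isotropic leaves are simply connected but may fail in general. The likely remedy is either an obstruction-theoretic argument showing the lifting defect itself contributes a further element of $S(N)$, or bypassing the graph trick altogether in favor of the coisotropic Floer-theoretic framework of \cite{Zi}, which counts leaf-wise intersection points directly on $N$ and extracts the bound from the action filtration there.
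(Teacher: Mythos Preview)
The paper does not argue this independently: its entire proof is the sentence ``This is an immediate consequence of \cite[Theorem~1.1]{Zi}.'' So your second proposed fallback---invoking the leafwise fixed-point framework of \cite{Zi} directly---is exactly what the paper does.

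Your graph reduction to Theorem~\ref{thm:L} is a natural idea, and the steps through $e(\wt L)\leq e(N)$ are sound: $\wt L$ is a closed Lagrangian in the geometrically bounded product $\big(M\times N_\om,\om\oplus(-\om_N)\big)$, and lifting a displacing Hamiltonian to the first factor displaces $\wt L$ with the same Hofer norm. The gap you flag in the action comparison $A(\wt L)\geq A(N)$ is genuine, not cosmetic. The boundary-lifting problem for $u_2$ is a relative section problem for the bundle $\pi\colon N\to N_\om$ over the pair $(\D,S^1)$; pulled back along $u_2$ the bundle trivializes as $\D\times F$, and under that trivialization the prescribed boundary $u_1|_{S^1}$ becomes a loop in the leaf $F$ whose class in $\pi_1(F)$ is exactly the obstruction to the lift. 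It need not vanish---already for $N=S^{2n-1}\sub\R^{2n}$ the leaves are Hopf circles, $\pi_1(F)=\Z$, and one can write down disks $\wt u$ realizing any prescribed integer obstruction. Your first suggested remedy, absorbing the lifting defect into $S(N)$, does not go through in any obvious way: taking an arbitrary lift $\hat u$ and attempting to glue $u_1$ to $\bar{\hat u}$, the boundaries disagree, and an interpolating cylinder in $N$ that is not tangent to the isotropic foliation carries uncontrolled $\om$-area with no evident relation to $S(N)$.

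In summary, the graph trick yields a complete reduction to Theorem~\ref{thm:L} when the isotropic leaves are simply connected, but in general the step $A(\wt L)\geq A(N)$ needs precisely the coisotropic input that \cite{Zi} supplies---at which point one may as well cite that result outright, as the paper does.
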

\begin{proof} This is an immediate consequence of \cite[Theorem 1.1]{Zi}.
\end{proof}
\begin{Rmk} 
This theorem generalizes Chekanov's Theorem \ref{thm:L}.
\end{Rmk}
In the proof of Theorem \ref{thm:action} we will also use the following lemma. Let $(M,\om)$ be a presymplectic manifold. Recall that this means that $\om$ is closed two-form on $M$, and $\corank\om_x:=\dim(T_xM)^\om$ does not depend on $x\in M$. Let $N\sub M$ be a coisotropic submanifold. This means that for every $x\in N$ the space $(T_xN)^\om$ is contained in $T_xN$. We denote by $\iota:N\to M$ the inclusion. 
\begin{rmk}\label{rmk:presympl} The form $\iota^*\om$ is presymplectic. That its corank is constant, follows from Lemma \ref{le:W coiso} and Remark \ref{rmk:W i om} in the appendix.
\end{rmk}
By Remark \ref{rmk:presympl} the distribution $(TN)^\om$ defines a foliation on $N$. We denote by $N_\om$ the set of its leaves and define the \emph{action spectrum} $S(N)=S(M,N)=S(M,\om,N)$ and the \emph{minimal action (or area)}
\[A(N)=A(M,N)=A(M,\om,N)\]
of such a submanifold as in (\ref{eq:S N},\ref{eq:A N}).
\begin{lemma}[lift of coisotropic submanifold]\label{le:lift} Let $(M,\om)$ and $(M',\om')$ be presymplectic manifolds, $f:M'\to M$ a surjective proper presymplectic%
\footnote{This means that $f^*\om=\om'$.}%
 submersion, and $N\sub M$ a coisotropic submanifold. Then the following statements hold:
\begin{enui}
\item\label{le:lift:coiso} The set $N':=f^{-1}(N)$ is a coisotropic submanifold of $M'$.
\item\label{le:lift:action} 
\begin{equation}\label{eq:A om wt N}A\big(M,N\big)\leq A(M',N').
\end{equation}
\item\label{le:lift:reg} Assume that $N$ is regular and, for all $x',y'\in M'$,
\begin{equation}\label{eq:f x' f y'}f(x')=f(y')\then x'\textrm{ and }y'\textrm{ lie on the same isotropic leaf of }M'.
\end{equation}
Then $N'$ is regular.
\end{enui}
\end{lemma}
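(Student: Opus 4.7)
The plan is to establish the three parts in order, exploiting throughout the identities
\[T_{x'}N'=df(x')^{-1}(T_{f(x')}N),\qquad (T_{x'}N')^{\om'}=df(x')^{-1}\big((T_{f(x')}N)^{\om}\big),\]
which hold at each $x'\in N'=f^{-1}(N)$ by surjectivity of $df(x')$ together with $f^*\om=\om'$. In particular, $df$ maps the characteristic distribution $(TN')^{\om'}$ pointwise onto $(TN)^{\om}$.

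Part (\ref{le:lift:coiso}) is then immediate: for $v\in(T_{x'}N')^{\om'}$ the second identity places $df(x')v\in(T_{f(x')}N)^{\om}\sub T_{f(x')}N$ by coisotropy of $N$, whence $v\in T_{x'}N'$. For part (\ref{le:lift:action}), the same observation shows that $f$ carries each isotropic leaf $F'$ of $N'$ into a single isotropic leaf $F$ of $N$. Thus, given any test map $u'\in C^\infty(\D,M')$ for $S(M',N')$ with $u'(S^1)\sub F'$, the composition $u:=f\circ u'\in C^\infty(\D,M)$ satisfies $u(S^1)\sub F$ and $\int_\D u^*\om=\int_\D(u')^*\om'$. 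This yields $S(M',N')\sub S(M,N)$, and passing to infima over the positive parts gives (\ref{eq:A om wt N}).

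For part (\ref{le:lift:reg}), the strategy is to realise $N'_{\om'}$ as the target of the smooth submersion
\[\Pi:=\pi\circ f|_{N'}\colon N'\to N_\om\]
by showing that its level sets are exactly the isotropic leaves of $N'$. One inclusion (leaves $\sub$ level sets) is the observation used in part (\ref{le:lift:action}). For the reverse, take $x',y'\in N'$ with $\Pi(x')=\Pi(y')$, so that $f(x')$ and $f(y')$ lie on a common leaf $F$ of $N$. Join them by a smooth path $\ga$ in $F$, and lift $\ga$ to a path $\wt\ga\colon[0,1]\to N'$ starting at $x'$ and tangent to $(TN')^{\om'}$; such a lift exists locally by the pointwise surjectivity of $df$ on characteristic distributions, and extends across $[0,1]$ by properness of $f|_{N'}$ (inherited from $f$) combined with compactness of $\ga([0,1])$. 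The endpoint $\wt\ga(1)$ lies on the leaf $G'$ of $N'$ through $x'$ and satisfies $f(\wt\ga(1))=f(y')$, so hypothesis (\ref{eq:f x' f y'}) places $\wt\ga(1)$ and $y'$ on a common isotropic leaf $L$ of $M'$. Since $\ker\om'\sub(TN')^{\om'}$ along $N'$, the leaf $L$ is tangent to $N'$ at every point of $L\cap N'$, making $L\cap N'$ open in $L$; closedness of $N'$ in $M'$ makes it also closed in $L$, and connectedness of $L$ then forces $L\sub N'$. Being integral to $\ker\om'\sub(TN')^{\om'}$, $L$ lies in a single leaf of $N'$, so $x'$, $\wt\ga(1)$ and $y'$ all share a leaf. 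Having thus identified fibres of $\Pi$ with leaves of $N'$, the bijection $N'_{\om'}\to N_\om$ induced by $\Pi$ transports the smooth structure on $N_\om$ to $N'_{\om'}$, and $\pi'\colon N'\to N'_{\om'}$ becomes identified with $\Pi$, hence is a smooth submersion.

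The main technical obstacle is the global existence of the horizontal lift $\wt\ga$: while local lifts follow from Frobenius, extending them across $[0,1]$ requires confining the candidate lift to a compact subset of $N'$ and invoking a maximal-interval argument, both of which rely on properness of $f|_{N'}$. A secondary delicate point is the passage from ``$L$ tangent to $N'$'' to ``$L\sub N'$'', which makes essential use of the closedness of $N'$ in $M'$ (coming ultimately from closedness of $N$ in $M$ in the intended applications) and of the connectedness of isotropic leaves of $M'$.
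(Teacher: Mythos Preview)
Your argument is correct and follows essentially the same route as the paper: the two identities you state up front are exactly the content of the paper's linear-algebra Lemma~\ref{le:V V'}, parts (\ref{le:lift:coiso}) and (\ref{le:lift:action}) are proved identically, and part (\ref{le:lift:reg}) proceeds via the same path-lifting construction (packaged in the paper as Proposition~\ref{prop:lift path}) followed by the same identification of $N'_{\om'}$ with $N_\om$ through $\pi\circ f|_{N'}$ (the paper's Lemma~\ref{le:reg}). Your open--closed argument for $L\sub N'$ is in fact more explicit than the paper, which simply asserts $R^{M',\om'}\cap(N'\x N')\sub R^{N',\om'}$ without justification; your acknowledged reliance on closedness of $N'$ is a genuine hypothesis (needed by both arguments) that holds in the applications, and your phrase ``tangent to $N'$ implies $L\cap N'$ open in $L$'' should be read as invoking the foliation structure (uniqueness of integral manifolds of $\ker\om'$), not mere first-order tangency.
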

\begin{Rmk} In fact equality in (\ref{eq:A om wt N}) holds. However, this will not be used here.
\end{Rmk}
In the proof of Lemma \ref{le:lift} we will use the following. By a presymplectic vector space we mean a vector space together with a skew-symmetric bilinear form. 
\begin{lemma}\label{le:V V'} Let $(V,\om)$ and $(V',\om')$ be presymplectic vector spaces, $\Phi:V'\to V$ a linear presymplectic map%
\footnote{This means that $\Phi^*\om=\om'$.}%
, and $W\sub V$ a linear subspace. Then the following statements hold:
\begin{enui}
\item\label{le:V V':Phi -1 W} 
\begin{equation}\label{eq:Phi -1 W}\Phi^{-1}(W^\om)\sub\big(\Phi^{-1}(W)\big)^{\om'}.\end{equation}
\item\label{le:V V':surj} If $\Phi$ is surjective then the inclusion ``$\cont$'' in (\ref{eq:Phi -1 W}) holds.
\end{enui}
\end{lemma}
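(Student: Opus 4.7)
The plan is to prove both parts by a direct chase with the defining identity $\Phi^*\omega = \omega'$, without invoking any deeper structural facts about (pre)symplectic complements.

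For part \ref{le:V V':Phi -1 W}, I would take an arbitrary $v' \in \Phi^{-1}(W^\omega)$ and an arbitrary $u' \in \Phi^{-1}(W)$, and compute
\[
\omega'(v',u') = (\Phi^*\omega)(v',u') = \omega\bigl(\Phi(v'),\Phi(u')\bigr).
\]
By assumption $\Phi(v') \in W^\omega$ and $\Phi(u') \in W$, so this vanishes, giving $v' \in (\Phi^{-1}(W))^{\omega'}$. This establishes the inclusion \eqref{eq:Phi -1 W} and uses nothing beyond the definitions; no non-degeneracy is needed.

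For part \ref{le:V V':surj}, assuming $\Phi$ is surjective, I would prove the reverse inclusion. Fix $v' \in (\Phi^{-1}(W))^{\omega'}$; the goal is to show $\Phi(v') \in W^\omega$, i.e.\ $\omega(\Phi(v'),w) = 0$ for every $w \in W$. Given such a $w$, surjectivity of $\Phi$ supplies a preimage $u' \in V'$ with $\Phi(u') = w$, in particular $u' \in \Phi^{-1}(W)$. Then
\[
\omega(\Phi(v'),w) = \omega\bigl(\Phi(v'),\Phi(u')\bigr) = (\Phi^*\omega)(v',u') = \omega'(v',u') = 0,
\]
the last equality because $v' \in (\Phi^{-1}(W))^{\omega'}$ and $u' \in \Phi^{-1}(W)$. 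Thus $v' \in \Phi^{-1}(W^\omega)$, completing the reverse inclusion.

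There is no real obstacle here; the only place the surjectivity hypothesis enters is to produce a preimage of an arbitrary $w \in W$, and it is clear that without it one would only control the $\omega$-orthogonal over $\Phi(V') \cap W$ rather than all of $W$, which is exactly why part \ref{le:V V':Phi -1 W} is a strict inclusion in general. I would state the proof in exactly these two short paragraphs, pointing out that both directions are formal consequences of the identity $\Phi^*\omega = \omega'$ combined with the definition of the $\omega$-complement.
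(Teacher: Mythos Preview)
Your proof is correct and matches the paper's approach exactly: the paper simply states that the lemma ``follows from the definitions,'' and your argument is precisely the direct unwinding of those definitions.
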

\begin{proof}[Proof of Lemma \ref{le:V V'}] This follows from the definitions.
\end{proof}
The proof of Lemma \ref{le:lift}(\ref{le:lift:reg}) is based on the following. Let $M$ be a (smooth finite-dimensional) manifold and $\F$ a (smooth) foliation on $M$, i.e., a maximal atlas of foliation charts. We denote by $R^\F$ its \emph{leaf relation}. This is the subset of $M\x M$ consisting of pairs of points lying in the same leaf. We call $\F$ \emph{regular} iff there exists a manifold structure%
\footnote{The induced topology is by definition Hausdorff and second countable.}%
 on the set of leaves $M/R^\F$, such that the canonical projection $\pi^\F:M\to M/R^\F$ is a (smooth) submersion.
\begin{lemma}\label{le:reg} Let $(M,\F)$ and $(M',\F')$ be foliated manifolds, such that $\F$ is regular. Let $f:M'\to M$ be a smooth surjective submersion such that
\begin{equation}\label{eq:x' R F y'}x'R^{\F'} y'\iff f(x')R^{\F}f(y'),\quad\forall x',y'\in M'.
\end{equation}
Then $\F'$ is regular.
\end{lemma}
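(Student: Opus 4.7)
The plan is to build the required manifold structure on $M'/R^{\F'}$ by transporting the structure from $M/R^\F$ along a canonical bijection induced by $f$. The key construction is the composition
\[g := \pi^\F \circ f : M' \to M/R^\F,\]
which is a smooth surjective submersion, as both $\pi^\F$ and $f$ are. The whole argument hinges on showing that the fibers of $g$ coincide exactly with the equivalence classes of $R^{\F'}$.

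First I would verify this fiber identification. For $x',y' \in M'$, we have $g(x') = g(y')$ if and only if $f(x') \, R^\F f(y')$, which by hypothesis (\ref{eq:x' R F y'}) is equivalent to $x' R^{\F'} y'$. Hence $g$ factors through $\pi^{\F'}$, yielding a well-defined set-theoretic bijection $\bar g : M'/R^{\F'} \to M/R^\F$; surjectivity of $\bar g$ uses surjectivity of $f$, and injectivity comes from the ``$\Leftarrow$'' direction of (\ref{eq:x' R F y'}).

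Then I would pull back the manifold structure on $M/R^\F$ along $\bar g$ to obtain a manifold structure on $M'/R^{\F'}$, which is automatically Hausdorff and second countable since $M/R^\F$ is (by regularity of $\F$) and $\bar g$ is a bijection. Under this structure $\bar g$ is a diffeomorphism by construction, and the canonical projection factors as $\pi^{\F'} = \bar g^{-1} \circ g$, which exhibits $\pi^{\F'}$ as the composition of a smooth submersion with a diffeomorphism and is therefore itself a smooth submersion. I do not anticipate a genuine obstacle: the entire argument reduces to the observation that hypothesis (\ref{eq:x' R F y'}) forces the fibers of $g$ to coincide with the leaves of $\F'$, after which everything else is formal transport of structure along a bijection.
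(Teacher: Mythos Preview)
Your proof is correct and is essentially identical to the paper's own argument: the paper defines the bijection $\phi:M'/R^{\F'}\to M/R^\F$, $\phi(F')=[f(x')]$ (your $\bar g$), pulls back the manifold structure $\A$ on $M/R^\F$ along $\phi$, and observes that $\pi^{\F'}=\phi^{-1}\circ\pi^\F\circ f$ is then a smooth submersion. The only cosmetic difference is that you name the composite $g=\pi^\F\circ f$ explicitly.
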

\begin{proof}[Proof of Lemma \ref{le:reg}] We define the map
\[\phi:M'/R^{\F'}\to M/R^\F,\quad\phi(F'):=[f(x')],\]
where $x'\in F'$ is an arbitrary point. It follows from (\ref{eq:x' R F y'}) that this map is well-defined and injective. Our hypothesis that $f$ is surjective implies that $\phi$ is surjective, as well. By our assumption that $\F$ is regular there exists a manifold structure $\A$ on $M/R^\F$, for which the canonical projection $\pi^\F:M\to M/R^\F$ is a smooth submersion. Since $f$ is a smooth submersion and
\[\pi^{\F'}=\phi^{-1}\circ\pi^\F\circ f,\]
the map $\pi^{\F'}$ is a smooth submersion \wrt the pullback of $\A$ under $\phi$. Hence $\F'$ is regular. This proves Lemma \ref{le:reg}.
\end{proof}
\begin{proof}[Proof of Lemma \ref{le:lift}]\setcounter{claim}{0} \textbf{(\ref{le:lift:coiso}):} Since $f$ is a submersion, $N'$ is a submanifold of $M'$. It follows from Lemma \ref{le:V V'}(\ref{le:V V':surj}) that it is coisotropic. This proves (\ref{le:lift:coiso}).\\

To prove (\ref{le:lift:action},\ref{le:lift:reg}), we denote by $R^{N,\om}$ the isotropic leaf relation on $N$. This is the subset of $N\x N$ consisting of pairs of points that lie in the same isotropic leaf of $N$.
\begin{claim}\label{claim:x' y'}
\begin{enua}
\item\label{claim:x' y':then} If $(x'_0,x'_1)\in R^{N',\om'}$ then $\big(f(x'_0),f(x'_1)\big)\in R^{N,\om}$.
\item\label{claim:x' y':follows} If $x'_0,x'_1\in N'$ are such that $\big(f(x'_0),f(x'_1)\big)\in R^{N,\om}$ then
\begin{equation}\label{eq:N' om'}N'_{x'_0}\cap f^{-1}(f(x'_1))\neq\emptyset.\end{equation}
Here $N'_{x'_0}$ denotes the isotropic leaf of $N'$ through $x'_0$.
\end{enua}
\end{claim}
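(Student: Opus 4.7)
The plan is to reduce both parts to the pointwise identity
\[(T_{x'}N')^{\om'}=df(x')^{-1}\big((T_{f(x')}N)^\om\big),\]
valid for every $x'\in N'$. This follows from Lemma \ref{le:V V'}(\ref{le:V V':surj}) applied to the surjective linear presymplectic map $\Phi:=df(x'):T_{x'}M'\to T_{f(x')}M$ and the subspace $W:=T_{f(x')}N$, together with the equality $T_{x'}N'=df(x')^{-1}(T_{f(x')}N)$, which holds since $N'=f^{-1}(N)$ and $f$ is a submersion. In particular, $df(x')$ sends $(T_{x'}N')^{\om'}$ onto $(T_{f(x')}N)^\om$, with kernel the vertical subspace $V_{x'}:=\ker df(x')\cap T_{x'}N'$, which is automatically contained in $(T_{x'}N')^{\om'}$.

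For (\ref{claim:x' y':then}) I would take a piecewise smooth path $\tilde\ga:[0,1]\to N'_{x'_0}$ with $\tilde\ga(0)=x'_0$ and $\tilde\ga(1)=x'_1$; such a path exists because the leaf $N'_{x'_0}$ is a connected immersed submanifold of $N'$. Then $\tilde\ga'(t)\in(T_{\tilde\ga(t)}N')^{\om'}$, so by the identity $(f\circ\tilde\ga)'(t)=df(\tilde\ga(t))\tilde\ga'(t)\in(T_{f\tilde\ga(t)}N)^\om$. Hence $f\circ\tilde\ga$ is a piecewise smooth path in $N$ tangent to the isotropic distribution and therefore lies in a single isotropic leaf, so $(f(x'_0),f(x'_1))\in R^{N,\om}$.

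For (\ref{claim:x' y':follows}) I would choose a piecewise smooth path $\ga:[0,1]\to N$ tangent to $(TN)^\om$ with $\ga(0)=f(x'_0)$ and $\ga(1)=f(x'_1)$, and aim to lift it to a path $\tilde\ga:[0,1]\to N'$ tangent to $(TN')^{\om'}$ with $\tilde\ga(0)=x'_0$; the endpoint $\tilde\ga(1)$ then lies in $N'_{x'_0}\cap f^{-1}(f(x'_1))$, proving (\ref{eq:N' om'}). To construct $\tilde\ga$, I would fix any Riemannian metric on $M'$ and let $H\sub(TN')^{\om'}$ be the fiberwise orthogonal complement of $V$ inside $(TN')^{\om'}$. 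By the surjectivity above, $df$ restricts to a fiberwise linear isomorphism $H\to(TN)^\om$; inverting this isomorphism lifts $\ga'$ to a smooth time-dependent vector field $\wt v(t,\cdot)$ on a neighborhood of $f|_{N'}^{-1}(\ga([0,1]))$ in $N'$, and local ODE theory yields a smooth lift $\tilde\ga$ on some maximal subinterval $[0,T)\sub[0,1]$ with values in $N'$.

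The main obstacle is to show $T=1$ and that $\tilde\ga$ extends to $t=T$. For this I would exploit properness of $f$: the set $f|_{N'}^{-1}(\ga([0,T]))$ is compact in $N'$ (because $f$ is proper and $N'=f^{-1}(N)$), so the curve $\tilde\ga|_{[0,T)}$ stays in a fixed compact region. A standard continuation argument then applies: any subsequential limit $x'_\infty$ of $\tilde\ga(t_n)$ as $t_n\uparrow T$ satisfies $f(x'_\infty)=\ga(T)$, and since $\wt v$ is smooth near $x'_\infty$ and $\tilde\ga$ solves the corresponding ODE, continuous dependence on initial data combined with uniqueness forces $\tilde\ga$ to extend strictly past $T$, contradicting maximality unless $T=1$ with $\tilde\ga(1)$ defined. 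The endpoint $\tilde\ga(1)$ then lies in $N'_{x'_0}$ by construction and projects to $f(x'_1)$, completing the plan.
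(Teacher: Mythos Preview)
Your proof is correct and follows essentially the same route as the paper: both establish the pointwise identity $(T_{x'}N')^{\om'}=df(x')^{-1}\big((T_{f(x')}N)^\om\big)$ via Lemma~\ref{le:V V'}, use it to push forward leaf-tangent paths for part~(\ref{claim:x' y':then}), and for part~(\ref{claim:x' y':follows}) lift a leaf-tangent path in $N$ through a horizontal distribution, using properness of $f$ to make the lift global.

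The one difference worth noting is organizational. The paper packages the horizontal-lift-plus-properness argument as a standalone result (Proposition~\ref{prop:lift path}) for arbitrary proper submersions, with $H$ any complement to $\ker df$ in $TM'$, and then observes that \emph{any} lift $x'$ of $x$ automatically satisfies $\dot x'\in df^{-1}\big((TN)^\om\big)=(TN')^{\om'}$ by the preimage identity---so there is no need to choose $H$ inside $(TN')^{\om'}$. Your version builds the lift directly inside $(TN')^{\om'}$ by taking $H$ to be the orthogonal complement of the vertical there; this is equally valid but does a little more work than necessary, since the tangency to $(TN')^{\om'}$ comes for free from the identity regardless of how the lift is produced.
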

\begin{proof}[Proof of Claim \ref{claim:x' y'}] Let $x'\in N'$. Since $f$ is a submersion, we have
\[T_{x'}N'=df(x')^{-1}(T_{f(x')}N).\]
Using that $f$ is presymplectic, Lemma \ref{le:V V'} therefore implies that
\begin{equation}\label{eq:T x' N'}(T_{x'}N')^{\om'}=df(x')^{-1}(T_{f(x')}N)^\om.\end{equation}
It follows that $f\big(N'_{x'}\big)\sub N_{f(x')}$. This proves \textbf{(\ref{claim:x' y':then})}.\\

\textbf{Proof of (\ref{claim:x' y':follows}):} We choose a path $x\in C^\infty\big([0,1],N\big)$ that is tangent to $(TN)^\om$ and satisfies $x(i)=f(x'_i)$ for $i=0,1$. Since $f$ is a proper submersion, by Proposition \ref{prop:lift path} in the appendix there exists a path $x'\in C^\infty\big([0,1],M'\big)$ satisfying $x'(0)=x'_0$ and $f\circ x'=x$. It follows that $x'([0,1])\sub N'$. Since $\dot x(t)\in(T_{x(t)}N)^\om$, equality (\ref{eq:T x' N'}) implies that $\dot x'(t)\in(T_{x'(t)}N')^{\om'}$, for every $t\in[0,1]$. It follows that $x'(1)\in N'_{x'(0)}$. Since $x'(0)=x'_0$ and $f(x'(1))=x(1)=f(x'_1)$, condition (\ref{eq:N' om'}) follows. This proves (\ref{claim:x' y':follows}) and completes the proof of Claim \ref{claim:x' y'}.
\end{proof}

\textbf{Proof of (\ref{le:lift:action}):} Let $u'\in C^\infty(\D,M')$ be such that $u'(S^1)$ is contained in some isotropic leaf of $N'$. Claim \ref{claim:x' y'}(\ref{claim:x' y':then}) implies that $f\circ u'(S^1)$ is contained in some isotropic leaf of $N$. Since $f$ is presymplectic, we have 
\[\int_\D{u'}^*\om'=\int_\D(f\circ u')^*\om.\]
It follows that $S(M',N')\sub S(M,N)$, and therefore,
\[A(M',N')\geq A(M,N).\] 
This proves (\ref{le:lift:action}).\\

\textbf{Proof of (\ref{le:lift:reg}):} By Claim \ref{claim:x' y'}(\ref{claim:x' y':then}) the implication ``$\then$'' in condition (\ref{eq:x' R F y'}) with $M,M'$ replaced by $N,N'$, and $\F=\F^{N,\om},$ $\F'=\F^{N',\om'}$, is satisfied. Here $\F^{N,\om}$ denotes the isotropic foliation on $N$ \wrt $\om$.\\

To see the opposite implication, let $x'_0,x'_1\in N'$ be such that the relation $f(x'_0)R^{N,\om}f(x'_1)$ holds. By Claim \ref{claim:x' y'}(\ref{claim:x' y':follows}) there exists $y'_1\in N'_{x'_0}\cap f^{-1}(f(x'_1))$. Since $f(x'_1)=f(y'_1)$, our hypothesis (\ref{eq:f x' f y'}) implies that 
\[(x'_1,y'_1)\in R^{M',\om'}\cap(N'\x N')\sub R^{N',\om'}.\]
Since $(x'_0,y'_1)\in R^{N',\om'}$, it follows that $(x'_0,x'_1)\in R^{N',\om'}$. This shows the implication ``$\follows$'' in (\ref{eq:x' R F y'}) with $M,M'$ replaced by $N,N'$, and $\F=\F^{N,\om}$, $\F'=\F^{N',\om'}$. Hence (\ref{eq:x' R F y'}) is satisfied. Therefore, applying Lemma \ref{le:reg}, it follows that $N'$ is regular. This proves (\ref{le:lift:reg}) and completes the proof of Lemma \ref{le:lift}.
\end{proof}
In the proof of Theorem \ref{thm:action} we will also use the following lemma. 
\begin{lemma}\label{le:M' M} Let $(M,\om)$ be a presymplectic manifold, $M'$ a coisotropic submanifold of $M$ and $M''$ a coisotropic submanifold of $M'$. Then the following holds.
\begin{enui}
\item\label{le:M' M:coiso} $M''$ is a coisotropic submanifold of $M$.
\item\label{le:M' M:retract} If $M$ strongly smoothly deformation retracts onto $M'$ then
\begin{equation}\label{eq:A M om N} A\big(M',M''\big)\leq A(M,M'').
\end{equation}
\end{enui}
\end{lemma}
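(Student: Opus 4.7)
The plan for part (\ref{le:M' M:coiso}) is a pointwise linear-algebra computation. Fix $x\in M''$ and set $V:=T_xM$, $W:=T_xM'$, $U:=T_xM''$. Two general facts about presymplectic vector spaces will do the work: any coisotropic subspace contains the radical of the ambient form, and for every subspace $S\sub V$ one has $(S^\om)^\om=S+V^\om$ (proved by passing to the symplectic reduction $V/V^\om$). Since $W$ is coisotropic in $(V,\om)$ we get $V^\om\sub W$, and since $U$ is coisotropic in the presymplectic space $(W,\om|_W)$ whose radical is $W^\om$, we get $W^\om\sub U$. Applying $\om$-perp to $W^\om\sub U$ and then invoking the double-perp identity yields $U^\om\sub(W^\om)^\om=W+V^\om=W$. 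Hence any $v\in U^\om$ lies in $W$ and satisfies $\om(v,u)=0$ for all $u\in U$, so it lies in the $\om|_W$-perp of $U$ inside $W$, which is contained in $U$ by coisotropy of $U$ in $(W,\om|_W)$. This proves $U^\om\sub U$, i.e.\ $M''$ is coisotropic in $M$.

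For part (\ref{le:M' M:retract}) the plan is to push admissible disks from $M$ into $M'$ without changing their symplectic area. I would first check that the isotropic foliation of $M''$ is intrinsic: at any $x\in M''$ both descriptions (coisotropic in $M$, or in $M'$) yield the distribution $(T_xM'')^\om$, using that $(T_xM'')^\om\sub T_xM''\sub T_xM'$ by part (\ref{le:M' M:coiso}). Consequently $S(M,M'')$ and $S(M',M'')$ are defined using the same family of leaves $F\sub M''$. Now let $r:[0,1]\x M\to M$ be a strong smooth deformation retraction onto $M'$, so $r_0=\id_M$, $r_1(M)\sub M'$, and $r_t|_{M'}=\id_{M'}$ for every $t$. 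For any $u\in C^\infty(\D,M)$ with $u(S^1)\sub F$ for some isotropic leaf $F$ of $M''$, set $u':=r_1\circ u\in C^\infty(\D,M')$; its boundary circle agrees with that of $u$ because $F\sub M'$ and $r_1$ fixes $M'$ pointwise. Applying Stokes' theorem to the closed form $H^*\om$ on the cylinder $[0,1]\x\D$, where $H(t,z):=r_t(u(z))$, and using that $H$ is independent of $t$ on $[0,1]\x S^1$ (since the boundary stays in $M'$ throughout the homotopy), one concludes

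\[\int_\D u^*\om=\int_\D(u')^*\om.\]

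This exhibits the inclusion $S(M,M'')\sub S(M',M'')$ and hence $A(M',M'')\leq A(M,M'')$.

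The main obstacle I foresee is establishing the double-perp identity $(S^\om)^\om=S+V^\om$ cleanly in the presymplectic setting, since the usual nondegenerate arguments have to be replaced by working in the symplectic quotient $V/V^\om$ and then pulling the identity back to $V$; everything else is routine once that identity, the intrinsic nature of the isotropic foliation, and the closedness of $\om$ are in hand.
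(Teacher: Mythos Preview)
Your proof is correct. For part (\ref{le:M' M:retract}) you follow essentially the same Stokes-on-the-cylinder argument as the paper, and you are in fact more careful than the paper in verifying that the isotropic foliation on $M''$ is the same whether computed inside $M$ or inside $M'$; the paper uses this fact implicitly when it asserts that $h_1\circ u$ ``is of the sort occurring in the definition of $S(M',M'')$''.

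For part (\ref{le:M' M:coiso}) your route differs from the paper's. You argue by direct inclusions: from $W^\om\sub U$ and the double-perp identity $(W^\om)^\om=W+V^\om=W$ you obtain $U^\om\sub W$, and then coisotropy of $U$ in $(W,\om|_W)$ gives $U^\om\sub U$. The paper instead reduces to a dimension criterion (its Lemma \ref{le:W coiso}): a subspace $W\sub V$ is coisotropic iff $\dim W+\dim W^{i^*\om}\geq\dim V+\dim V^\om$, and this inequality is chained through $V''\sub V'\sub V$. Your approach is arguably more direct once the double-perp identity is in hand, and the reduction to the symplectic quotient $V/V^\om$ that you sketch is exactly the right way to prove it; the paper's criterion in turn rests on a separate rank computation (its Lemma \ref{le:dim W om}). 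Both are standard presymplectic linear algebra and neither is materially harder than the other.
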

\begin{rmks}\label{rmk:M' M}
\begin{enui}\item\label{rmk:M' M:retract} That $M$ strongly smoothly deformation retracts onto $M'$ means that there exists a smooth map $h:[0,1]\x M\to M$ such that
\[h(0,\cdot)=\id,\quad h\big(\{1\}\x M\big)\sub M',\quad h(t,x)=x,\,\forall t\in[0,1],\,x\in M'.\]
\item The inequality ``$\geq$'' in (\ref{eq:A M om N}) is true without the retraction condition. However, this will not be used here.
\end{enui}
\end{rmks}
In the proof of Lemma \ref{le:M' M} we will use the following lemma.
\begin{lemma}\label{le:V V' V''} Let $(V,\om)$ be a finite-dimensional presymplectic vector space, $V'$ a coisotropic subspace of $(V,\om)$, and $V''$ a coisotropic subspace of $\big(V',\om':=\om|_{V'\x V'}\big)$. Then $V''$ is a coisotropic subspace of $(V,\om)$.
\end{lemma}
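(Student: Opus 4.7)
My plan is to reduce everything to a single dimension count, using the fact that for any subspace $W$ of a finite-dimensional presymplectic space $(V,\om)$ one has the identity
\[\dim W^\om=\dim V-\dim W+\dim(W\cap K),\quad K:=V^\om.\]
This is the only ``tool'' I need; the rest is bookkeeping about three nested spaces $K\sub V''\sub V'\sub V$.

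First I would check that $K\sub V''\sub V'$. That $K\sub V'$ is immediate, since $K\sub V'^\om\sub V'$ by coisotropy of $V'$. For $K\sub V''$, note that the kernel $K':=(V')^{\om'}$ of $\om'$ satisfies $K\sub K'$ (a vector killing all of $V$ in particular kills all of $V'$), and $K'\sub V''$ because $K'$ is contained in every $\om'$-complement in $V'$, and by hypothesis $V''^{\om'}\sub V''$.

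Next I would establish the key auxiliary inclusion $V'^\om\sub V''$. Since $V''\sub V'$, taking $\om$-complements reverses inclusions, so $V'^\om\sub V''^\om$; combined with $V'^\om\sub V'$ (coisotropy of $V'$) this gives $V'^\om\sub V'\cap V''^\om$. But $V'\cap V''^\om$ is exactly $V''^{\om'}$, the $\om'$-complement of $V''$ computed inside $V'$, and by coisotropy of $V''$ in $(V',\om')$ this is contained in $V''$. Hence $V'^\om\sub V''$.

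The heart of the argument is then a dimension count showing $V''^{\om'}=V''^\om$ (not just ``$\sub$''). Since $K\sub V''\sub V'$, the dimension formula yields
\[\dim V''^\om=\dim V-\dim V''+\dim K,\qquad \dim V'^\om=\dim V-\dim V'+\dim K.\]
For the same formula applied inside the presymplectic space $(V',\om')$ with kernel $K'=V'^\om$, the previous paragraph gives $V''\cap K'=K'$, so
\[\dim V''^{\om'}=\dim V'-\dim V''+\dim V'^\om=\dim V-\dim V''+\dim K=\dim V''^\om.\]
Since $V''^{\om'}=V'\cap V''^\om\sub V''^\om$ and both sides have the same dimension, they coincide, and the coisotropy hypothesis in $(V',\om')$ then delivers $V''^\om=V''^{\om'}\sub V''$, which is what we needed.

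The only genuinely subtle step is the last: the inclusion $V''^\om\sub V'$ is \emph{not} obvious from the definitions (one might worry about vectors in $V''^\om$ that escape $V'$), and the cleanest way to trap them inside $V'$ seems to be through this dimension identity rather than through any direct ``vector-chasing'' argument. Everything else is linear-algebraic routine once the presymplectic complement formula is in hand.
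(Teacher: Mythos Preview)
Your argument is correct. Both your proof and the paper's rest on the same dimension identity (Lemma~\ref{le:dim W om}), but the packaging differs. The paper first reformulates coisotropy as the inequality $\dim W+\dim W^{i^*\om}\geq\dim V+\dim V^\om$ (Lemma~\ref{le:W coiso}) and then simply chains the two hypotheses: since the ``inner kernel'' $W^{i^*\om}$ depends only on $\om|_{W\x W}$ and not on the ambient space, the coisotropy of $V'$ in $V$ and of $V''$ in $(V',\om')$ immediately give $\dim V''+\dim V''^{i^*\om}\geq\dim V'+\dim(V')^{\om'}\geq\dim V+\dim V^\om$, and one is done in two lines.

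You instead apply the dimension formula three times directly and obtain the stronger statement $V''^{\om}=V''^{\om'}$ (equality, not just inclusion), from which coisotropy follows. This is a genuine extra piece of information and makes transparent \emph{why} the result holds---no vector of $V''^\om$ escapes $V'$---at the cost of a longer argument. The paper's route is more economical; yours is more explicit and self-contained (it does not require first proving the ``if'' direction of Lemma~\ref{le:W coiso}).
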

\begin{proof}[Proof of Lemma \ref{le:V V' V''}] This follows from Lemma \ref{le:W coiso} in the appendix.
\end{proof}
\begin{proof}[Proof of Lemma \ref{le:M' M}]\textbf{(\ref{le:M' M:coiso}):} This follows from Lemma \ref{le:V V' V''}.\\

\textbf{We prove (\ref{le:M' M:retract}).} It suffices to show that 
\begin{equation}\label{eq:S M M''}S(M,M'')\sub S(M',M'').
\end{equation}
Let $u\in C^\infty(\D,M)$ be such that $u(S^1)$ is contained in some isotropic leaf of $M''$. We choose a map $h$ as in Remark \ref{rmk:M' M}(\ref{rmk:M' M:retract}). We denote $h_t:=h(t,\cdot)$ and define 
\[f:[0,1]\x\D\to M,\quad f(t,z):=h_t\circ u(z).\]
Using that $d\om=0$ and Stokes' theorem%
\footnote{We use a version of this result that allows the manifold to have corners. See e.g.~\cite[Theorem 16.25]{Le}.}%
, we have
\begin{align}\nn0&=\int_{[0,1]\x\D}df^*\om\\
\nn&=\int_{\dd\big([0,1]\x\D\big)}f^*\om\\
\nn&=\int_\D(h_1\circ u)^*\om-\int_\D(h_0\circ u)^*\om+\int_{[0,1]\x S^1}f^*\om\\
\label{eq:int D h1}&=\int_\D(h_1\circ u)^*\om-\int_\D u^*\om+0.
\end{align}
Here in the last equality we used the fact $h_0=\id$ and that $u(S^1)\sub M''\sub M'$, and therefore $h_t\circ u|_{S^1}$ is constant in $t$. Since $h_1(M)=h\big(\{1\}\x M\big)\sub M'$, the map $h_1\circ u$ takes values in $M'$. It is of the sort occurring in the definition of $S(M',M'')$. Hence (\ref{eq:int D h1}) implies (\ref{eq:S M M''}). This proves (\ref{le:M' M:retract}) and completes the proof of Lemma \ref{le:M' M}.
\end{proof}
In the proof of Theorem \ref{thm:action} we will also use the following lemma. 
\begin{lemma}\label{le:M M'} Let $(M,\om)$ and $(M',\om')$ be presymplectic manifolds, $N\sub M\x M'$ a coisotropic submanifold, and $x\in M$, such that $\dim M>2$ and $N\cap\big(\{x\}\x M'\big)=\emptyset$. Then
\begin{equation}\label{eq:A M x}A\big((M\wo\{x\})\x M',N\big)\leq A\big(M\x M',N\big).
\end{equation}
\end{lemma}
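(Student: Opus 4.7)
The plan is to show that every positive element of $S(M\x M',N)$ can be approximated from above by positive elements of $S((M\wo\{x\})\x M',N)$; passing to infima then yields (\ref{eq:A M x}). So fix $u\in C^\infty(\D,M\x M')$ with $u(S^1)$ lying in some isotropic leaf $F$ of $N$ and $\int_\D u^*(\om\oplus\om')>0$. Since $N\cap(\{x\}\x M')=\emptyset$, we have $F\sub(M\wo\{x\})\x M'$, and in particular $(\pi_1\circ u)(S^1)\sub M\wo\{x\}$. By continuity, $\pi_1\circ u$ takes values in $M\wo\{x\}$ on an open neighborhood $U\sub\D$ of $S^1$.

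The crucial step, in which the hypothesis $\dim M>2$ enters, is a transversality argument: I would produce a sequence $\tilde v_k\in C^\infty(\D,M)$ agreeing with $\pi_1\circ u$ outside a compact subset of $\D\wo\BAR U$ (in particular $\tilde v_k|_{S^1}=(\pi_1\circ u)|_{S^1}$), converging to $\pi_1\circ u$ in $C^1$, and satisfying $x\notin\tilde v_k(\D)$. Concretely, choosing a chart centered at $x$ and a cutoff function supported in $\D\wo\BAR U$, translate $\pi_1\circ u$ inside the chart by $t\cdot e$ for a small vector $e\in T_xM$; for almost every small $t$ (by Sard's theorem applied to the parametrized family), the resulting map is transverse to $\{x\}$, and since $\dim\D=2<\dim M$ transversality forces the image to miss $x$.

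Now set $\tilde u_k:=(\tilde v_k,\pi_2\circ u):\D\to(M\wo\{x\})\x M'$. Then $\tilde u_k|_{S^1}=u|_{S^1}\sub F$, and the isotropic foliation of $N$ is intrinsic to $(N,\iota^*(\om\oplus\om'))$ and is unaffected by shrinking the ambient manifold from $M\x M'$ to $(M\wo\{x\})\x M'$; hence $F$ remains an isotropic leaf of $N$ in the new ambient manifold and contains $\tilde u_k(S^1)$. By $C^1$-convergence,
\[\int_\D\tilde u_k^*(\om\oplus\om')=\int_\D\tilde v_k^*\om+\int_\D(\pi_2\circ u)^*\om'\longrightarrow\int_\D u^*(\om\oplus\om'),\]
so for $k$ large the left-hand side is positive and arbitrarily close to $\int_\D u^*(\om\oplus\om')$. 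Taking the infimum over $u$ establishes (\ref{eq:A M x}).

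I expect the transversality step to be the only real obstacle. The delicate points are keeping the perturbation supported away from $S^1$ (so that the boundary condition $\tilde u_k(S^1)\sub F$ is preserved exactly, not just approximately) and ensuring the convergence is strong enough in $C^1$ to control the change in $\int_\D v^*\om$. Both are handled by the standard cutoff-and-translate construction in a coordinate chart around $x$, and they rely essentially on $\dim M>2$, since for $\dim M=2$ a generic smooth map $\D\to M$ cannot be forced off the point $x$ without changing its homotopy class rel.\ boundary.
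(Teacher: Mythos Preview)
Your approach is correct in outline, but it diverges from the paper's argument in an instructive way. The paper does not approximate: it shows the exact inclusion $S(M\times M',N)\subseteq S\big((M\setminus\{x\})\times M',N\big)$. Given $u=(\pi_1\circ u,\pi_2\circ u)$, the paper constructs a homotopy $h:[0,1]\times M\to M$ with $h_0=\id$, $h_t=\id$ near $(\pi_1\circ u)(S^1)$, and $x\notin h_1\big((\pi_1\circ u)(\D)\big)$; the existence of $h$ uses exactly your Sard/dimension observation that the image of a smooth map $\D\to M$ has measure zero when $\dim M>2$. Then Stokes' theorem applied to the cylinder $[0,1]\times\D$ (the boundary contribution over $[0,1]\times S^1$ vanishes because $h_t\circ(\pi_1\circ u)|_{S^1}$ is $t$-independent) gives $\int_\D(h_1\circ\pi_1\circ u)^*\om=\int_\D(\pi_1\circ u)^*\om$ on the nose. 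So $(h_1\circ\pi_1\circ u,\pi_2\circ u)$ lies in $(M\setminus\{x\})\times M'$, has the same boundary in the same leaf $F$, and has the \emph{same} symplectic area as $u$.

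Your perturbation $\tilde v_k$ also fixes a neighbourhood of $S^1$, so the same Stokes argument applied to the homotopy $t\mapsto\tilde v_t$ would give $\int_\D\tilde v_k^*\om=\int_\D(\pi_1\circ u)^*\om$ exactly; you did not need the $C^1$-convergence at all. With this observation your argument and the paper's become essentially the same. Two small points of care in your sketch: (i) the phrase ``approximated from above'' is not what $C^1$-convergence gives you, though plain approximation suffices for the infimum inequality; (ii) translating by $t\cdot e$ for a single fixed direction $e$ and scalar $t$ need not make the parametrized family transverse to $\{x\}$---you should either let the parameter run over a full ball in $T_xM$, or argue directly (as the paper does) that since $(\pi_1\circ u)(\D)$ has measure zero one can pick a small vector $v$ with $-v\notin(\pi_1\circ u)(\D)$ and translate by $\chi\cdot v$.
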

\begin{Rmk} In fact equality in (\ref{eq:A M x}) holds. However, this will not be used here.
\end{Rmk}
\begin{proof}[Proof of Lemma \ref{le:M M'}] It suffices to prove that
\begin{equation}\label{eq:S M M'}S\big(M\x M',N\big)\sub S\big((M\wo\{x\})\x M',N\big).
\end{equation}
Let
\[\wt u=(u,u')\in C^\infty\big(\D,M\x M'\big)\]
be a map that sends $S^1$ to some isotropic leaf of $N$. Since $\dim M>2$, by Sard's Theorem $M\wo u(B^2_1)$ is dense in $M$. Hence an argument in a chart shows that there exists a smooth map $h:[0,1]\x M\to M$, such that
\begin{eqnarray*}&h(0,\cdot)=\id,\quad x\not\in h\big(\{1\}\x u(B^2_1)\big),&\\
&h(t,\cdot)=\id\textrm{ in some neighbourhood of }\pr_1(N)\sub M.&\end{eqnarray*}
Here we denoted by $\pr_1:M\x M'\to M$ the canonical projection, and we used the hypothesis that $N\cap\big(\{x\}\x M'\big)=\emptyset$. We denote $h_t:=h(t,\cdot)$ and define 
\[f:[0,1]\x\D\to M,\quad f(t,z):=h_t\circ u(z).\]
We have, as in (\ref{eq:int D h1}),
\[\int_\D(h_1\circ u)^*\om=\int_\D u^*\om.\]
Here we used the facts $h_0=\id$, $u(S^1)=\pr_1\circ\wt u(S^1)\sub\pr_1(N)$, and $h_t=\id$ in a neighbourhood of $\pr_1(N)$. It follows that
\begin{equation}\label{eq:int D h1 u u'}\int_\D\big(h_1\circ u,u'\big)^*\wt\om=\int_\D\wt u^*\wt\om.
\end{equation}
Since $x\not\in h_1(u(B^2_1))$, the map
\[\big(h_1\circ u,u'\big):\D\to\big(M\wo\{0\}\big)\x M'\]
is of the sort occurring in the definition of $S\big((M\wo\{x\})\x M',N\big)$. Hence (\ref{eq:int D h1 u u'}) implies (\ref{eq:S M M'}). This proves Lemma \ref{le:M M'}.
\end{proof}
In the proof of Theorem \ref{thm:action} we will also use the following.
\begin{lemma}\label{le:S N} Let $(M,\om)$ be a connected symplectic manifold and $N\sub M$ coisotropic submanifold. Then
\begin{equation}\label{eq:S N int}S(N)+\left\{\int_{S^2}u^*\om\,\Big|\,u\in C^\infty(S^2,M)\right\}\sub S(N).\end{equation}
\end{lemma}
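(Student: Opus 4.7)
The plan is to prove the inclusion pointwise by a smooth connect-sum construction. Given $A=\int_\D u^*\om\in S(N)$ with $u(S^1)$ contained in some isotropic leaf $F$ of $N$, and $B=\int_{S^2}v^*\om$ for some $v\in C^\infty(S^2,M)$, I will build $\tilde u\in C^\infty(\D,M)$ with $\tilde u|_{S^1}=u|_{S^1}$ (so $\tilde u(S^1)\sub F$) and $\int_\D\tilde u^*\om=A+B$. This witnesses $A+B\in S(N)$.

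First I would perform two preparatory homotopies that make $u$ and $v$ locally constant near a common point. Fix an interior point $z_0$ of $\D$ and a basepoint $s_0\in S^2$. By a straight-line deformation in a chart around $u(z_0)$, supported in a small disk $D_{3\epsilon}(z_0)$ contained in the interior of $\D$, replace $u$ by a smoothly homotopic map that is constant $=u(z_0)$ on $D_{2\epsilon}(z_0)$. Using connectedness of $M$, analogously homotope $v$ to arrange both $v(s_0)=u(z_0)$ and $v\const u(z_0)$ on a small disk $D'_{2\epsilon'}(s_0)\sub S^2$. Since $d\om=0$, Stokes' theorem applied to the cylinder $[0,1]\x\D$ (resp.\ $[0,1]\x S^2$) shows these homotopies preserve the integrals $A$ and $B$: the $u$-homotopy is stationary on $\dd\D=S^1$, so the side boundary of the cylinder contributes nothing; and $S^2$ has no boundary.

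Next, pick an orientation-preserving diffeomorphism $\psi:\BAR{D_\epsilon(z_0)}\to S^2\wo D'_{\epsilon'}(s_0)$ carrying $\dd D_\epsilon(z_0)$ onto $\dd D'_{\epsilon'}(s_0)$, and define
\[\tilde u(z):=\begin{cases} u(z), & z\in\D\wo D_\epsilon(z_0), \\ v(\psi(z)), & z\in\BAR{D_\epsilon(z_0)}. \end{cases}\]
On a collar of $\dd D_\epsilon(z_0)$ both branches equal $u(z_0)$: the first because $u$ is constant on $D_{2\epsilon}(z_0)\supset\BAR{D_\epsilon(z_0)}$; the second because $\psi$ sends such a collar into $D'_{2\epsilon'}(s_0)\wo D'_{\epsilon'}(s_0)$, where $v\const u(z_0)$. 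Hence $\tilde u$ is smooth. Splitting the area integral and using that $u$ is constant on $D_\epsilon(z_0)$, $v$ is constant on $D'_{\epsilon'}(s_0)$, and $\psi$ preserves orientation, one obtains $\int_\D\tilde u^*\om=A+B$; since $\tilde u|_{S^1}=u|_{S^1}$ still maps into $F$, this gives $A+B\in S(N)$.

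The main obstacle is making the construction simultaneously smooth and exact at the level of integrals; both are handled by the two preparatory homotopies, whose validity rests on the closedness of $\om$ (for area-preservation, via Stokes) and on the connectedness of $M$ (to align $v(s_0)$ with $u(z_0)$). Everything else is bookkeeping.
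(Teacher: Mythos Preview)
Your proof is correct and follows essentially the same connect-sum strategy as the paper: flatten $u$ near an interior point, wrap $v$ onto a small disk, use connectedness of $M$ to match the two constant values, and invoke $d\om=0$ (via Stokes) to see that the resulting disk has area $A+B$ while keeping the boundary in the same leaf $F$. The only cosmetic difference is that the paper inserts the connecting path as an explicit middle annulus between reparametrized copies of $u$ and $v$, whereas you absorb that path into a preparatory homotopy of $v$; the content is the same.
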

\begin{proof}[Proof of Lemma \ref{le:S N}]\setcounter{claim}{0} Let $u\in C^\infty(\D,M)$ be such that $u(S^1)$ is contained in some isotropic leaf of $N$, and $v\in C^\infty(S^2,M)$. We choose a point $z_0\in S^2$.
\begin{claim}\label{claim:wt u wt v} There exist maps $\wt u\in C^\infty\big(\D\wo B^2_{\frac12},M\big)$ and $\wt v\in C^\infty(\BAR B^2_{\frac13},M)$ such that
\begin{equation}\label{eq:int D wt u}\int_{\D\wo B^2_{\frac12}}\wt u^*\om=\int_\D u^*\om,\quad\int_{\BAR B^2_{\frac13}}\wt v^*\om=\int_{S^2}v^*\om,\end{equation}
$\wt u=u$ in some neighbourhood of $S^1$, $\wt u\const u(0)$ in some neighbourhood of $S^1_{\frac12}$, and $\wt v\const v(z_0)$ in some neighbourhood of $S^1_{\frac13}$.
\end{claim}
\begin{proof}[Proof of Claim \ref{claim:wt u wt v}] We choose a map $f\in C^\infty\big(\D\wo B^2_{\frac12},\D\big)$ that restricts to an orientation preserving diffeomorphism from $\D\wo\BAR B^2_{\frac34}$ to $\D\wo\{0\}$, equals identity in a neighbourhood of $S^1$, and sends $\BAR B^2_{\frac34}\wo B^2_{\frac12}$ to $0$. 
We define
\[\wt u:=u\circ f:\D\wo B^2_{\frac12}\to M.\]
This map has the required properties.

To construct $\wt v$, we choose a map $g\in C^\infty\big(\BAR B^2_{\frac13},S^2)$ that restricts to an orientation preserving diffeomorphism from $B^2_{\frac14}$ to $S^2\wo\{z_0\}$ and sends $\BAR B^2_{\frac13}\wo B^2_{\frac14}$ to $z_0$. The map $\wt v:=v\circ g$ has the required properties. This proves Claim \ref{claim:wt u wt v}.
\end{proof}
We choose $\wt u,\wt v$ as in this claim. Since $M$ is connected, there exists a path $x\in C^\infty\left(\left[\frac13,\frac12\right],M\right)$, such that $x(0)=v(z_0)$ and $x(1)=u(0)$. We may modify $x$, such that it is constant in some neighbourhoods of $\frac13$ and $\frac12$. We define
\[w(z):=\left\{\begin{array}{ll}
\wt v(z),&\textrm{if }z\in \BAR B^2_{\frac13},\\
x(|z|),&\textrm{if }z\in B^2_{\frac12}\wo\BAR B^2_{\frac13},\\
\wt u(z),&\textrm{if }z\in\D\wo B^2_{\frac12}.
\end{array}\right.\] 
This map is smooth. It follows from (\ref{eq:int D wt u}) that
\begin{equation}\label{eq:int D w}\int_\D w^*\om=\int_{S^2}v^*\om+0+\int_\D u^*\om.\end{equation}
Since $w=u$ in some neighbourhood of $S^1$, the image $w(S^1)$ is contained in some isotropic leaf of $N$. It follows that $\int_\D w^*\om\in S(N)$. Combining this with (\ref{eq:int D w}), the inclusion (\ref{eq:S N int}) follows. This proves Lemma \ref{le:S N}.
\end{proof}
In the proof of Theorem \ref{thm:action} we will also use the following.
\begin{rmk}\label{rmk:-S N} Let $(M,\om)$ be a symplectic manifold and $N\sub M$ a coisotropic submanifold. Then
\[S(N)=-S(N)=\big\{-a\,\big|\,a\in S(N)\big\}.\]
This follows from the fact that for every $u\in C^\infty(\D,M)$ we have
\[\int_\D\BAR u^*\om=-\int_\D u^*\om,\]
where $\BAR u(z):=u(\BAR z)$, for every $z\in\D\sub\C$.
\end{rmk}
\begin{proof}[Proof of Theorem \ref{thm:action}] We denote by
\[\pi:S^{2n+1}\x M\to\CP^n\x M\]
the canonical projection, by $\iota:S^{2n+1}\to\R^{2n}$ the inclusion, and by $\om_0$ the standard symplectic form on $\R^{2n}$. We equip $S^{2n+1}\x M$ with the presymplectic form $\iota^*\om_0\oplus\om$. It follows from Lemma \ref{le:lift}(\ref{le:lift:coiso},\ref{le:lift:action}) that $N'=\pi^{-1}(N)$ is a coisotropic submanifold of $S^{2n+1}\x M$, and
\begin{equation}\label{eq:A CP n} A\big(\CP^n\x M,N\big)\leq A\big(S^{2n+1}\x M,N'\big).
\end{equation}
Since $\pi$ is proper and $N$ is compact, $N'$ is compact. The manifold $\big(\R^{2n+2}\wo\{0\}\big)\x M$ strongly smoothly deformation retracts onto $S^{2n+1}\x M$. Hence by Lemma \ref{le:M' M}(\ref{le:M' M:retract}), we have
\begin{equation}\label{eq:A S 2n 1}A\big(S^{2n+1}\x M,N'\big)\leq A\big(\big(\R^{2n+2}\wo\{0\}\big)\x M,N'\big).
\end{equation}
Since $n\geq1$, by Lemma \ref{le:M M'} we have
\begin{equation}\label{eq:A R 2n 2}A\big(\big(\R^{2n+2}\wo\{0\}\big)\x M,N'\big)\leq A\big(\R^{2n+2}\x M,N'\big).\end{equation}
The symplectic manifold $\R^{2n+2}$ is geometrically bounded. Using our hypothesis that $M$ is geometrically bounded, it follows that $\R^{2n+2}\x M$ has the same property. Since by hypothesis $N$ is regular, by Lemma \ref{le:lift}(\ref{le:lift:reg}) with $f=\pi$ the same holds for $N'$. (Condition (\ref{eq:f x' f y'}) with $M':=S^{2n+1}\x M$ is satisfied, since
\[R^{S^{2n+1}\x M}=\big\{\big((x,y),(zx,y)\big)\,\big|\,(x,y)\in S^{2n+1}\x M,\,z\in S^1\big\},\]
where we consider $S^{2n+1}$ as a subset of $\C^{n+1}$ and $S^1\sub\C$.) Hence applying Theorem \ref{thm:leafwise}, we obtain
\begin{equation}\label{eq:A e}A\big(\R^{2n+2}\x M,N'\big)\leq e\big(\R^{2n+2}\x M,N'\big).
\end{equation}
We denote by $\pr_1:S^{2n+1}\x M\to S^{2n+1}$ the projection onto the first factor. We have
\begin{align}\nn e\big(\R^{2n+2}\x M,N'\big)&\leq e\big(\R^{2n+2}\x M,\pr_1(N')\x M\big)\\
\label{eq:e R 2n 2}&\leq e\big(\R^{2n+2},\pr_1(N')\big).\end{align}
Our hypothesis $\dim M<2n$ implies that $\dim N'=\dim N+1\leq 2n$. Hence the restriction $\pr_1|_{N'}:N'\to S^{2n+1}$ is not submersive at any point, and therefore the set of its regular values is the complement of its image. Hence by Sard's Theorem $\pr_1(N')\neq S^{2n+1}$. Therefore by Lemma \ref{le:X} we have
\[e\big(\R^{2n+2},\pr_1(N')\big)<\pi.\]
Combining this with (\ref{eq:A CP n}
-\ref{eq:e R 2n 2}), it follows that 
\[A(N)=A\big(\CP^n\x M,N\big)<\pi.\]
Hence there exists $a\in S(N)\cap(0,\pi)$. If $a\leq\frac\pi2$ then it follows that $A(N)\leq\frac\pi2$, as claimed. Otherwise $-a+\pi<\frac\pi2$. By Remark \ref{rmk:-S N} we have $-a\in S(N)$. Since there exists $u\in C^\infty(S^2,\CP^n)$, such that $\int_{S^2}u^*\om_\FS=\pi$, Lemma \ref{le:S N} implies that $-a+\pi\in S(N)$. Since $-a+\pi<\frac\pi2$, it follows that $A(N)<\frac\pi2$. Hence in every case we have $A(N)\leq\frac\pi2$. This proves Theorem \ref{thm:action}.
\end{proof}
\begin{proof}[Proof of Corollary \ref{cor:presympl}] We denote
\[\wt M:=\CP^n\x M,\quad\wt\om:=\om_\FS\oplus\om.\]
Assume by contradiction that there exists a presymplectic embedding $\phi:M'\to\wt M$. We denote $N:=\phi(M')$. It follows from our hypothesis (\ref{eq:dim M' corank}) and Lemma \ref{le:W coiso} in the appendix that $N$ is coisotropic. It is regular, since $M'$ is regular.
\begin{claim}\label{claim:A CP n M N} We have
\begin{equation}\label{eq:A CP n M N}A(\wt M,N)\geq\pi.
\end{equation}
\end{claim}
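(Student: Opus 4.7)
The plan is to show that $S(\wt M,N)\sub\pi\Z$, which immediately gives $A(\wt M,N)=\inf(S(\wt M,N)\cap(0,\infty))\geq\pi$. So fix $u\in C^\infty(\D,\wt M)$ with $u(S^1)\sub F$ for some isotropic leaf $F$ of $N$, and I will show $\int_\D u^*\wt\om\in\pi\Z$.

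First I would observe that the isotropic foliations of $M'$ and of $N=\phi(M')$ correspond under the presymplectic embedding $\phi$: the differential $d\phi$ intertwines $(TM')^{\om'}$ with $(TN)^{\wt\om}$, so the diffeomorphism $\phi\colon M'\to N$ sends isotropic leaves of $M'$ onto isotropic leaves of $N$. Since by hypothesis every leaf of $M'$ is simply connected, so is every isotropic leaf of $N$; in particular $F$ is simply connected.

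Next I would extend $u|_{S^1}$ to a smooth disk with image in $F$. Viewing $u|_{S^1}$ as a loop in the abstract leaf $F':=\phi^{-1}(F)$, simple-connectedness produces a continuous, hence (after smoothing) smooth, filling $v_0\colon\D\to F'$; set $v:=\phi\circ v_0\colon\D\to F\sub\wt M$. By a routine reparametrization I would arrange that $u$ and $v$ are independent of $|z|$ in a collar of $S^1$, so that gluing them along $S^1$ with opposite orientations produces a smooth sphere $w\colon S^2\to\wt M$ satisfying
\[\int_{S^2}w^*\wt\om \;=\; \int_\D u^*\wt\om \;-\; \int_\D v^*\wt\om.\]
Since $F$ is isotropic, tangent vectors to $F$ lie in $(TN)^{\wt\om}$, so $v^*\wt\om=0$ and the second integral vanishes; hence $\int_\D u^*\wt\om=\int_{S^2}w^*\wt\om$.

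Finally, splitting $\wt\om=\om_\FS\oplus\om$ and writing $w=(w_1,w_2)$ with $w_1\colon S^2\to\CP^n$ and $w_2\colon S^2\to M$, one has
\[\int_{S^2}w^*\wt\om \;=\; \int_{S^2}w_1^*\om_\FS \;+\; \int_{S^2}w_2^*\om.\]
The second term lies in $\pi\Z$ by hypothesis (\ref{eq:int S 2}); the first lies in $\pi\Z$ because $[\om_\FS]\in H^2(\CP^n;\R)$ is $\pi$ times the integral generator, as $\om_\FS$ integrates to $\pi$ over a projective line. Thus $\int_\D u^*\wt\om\in\pi\Z$, proving the claim. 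The step I would be most careful about is the correspondence between isotropic leaves of $M'$ and of $N$ under $\phi$, together with the smoothness of the extension $v$ and of the gluing; these are standard but hinge on $\phi$ being a presymplectic embedding and on the simply-connectedness hypothesis.
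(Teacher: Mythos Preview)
Your proof is correct and follows essentially the same approach as the paper's: both use simple-connectedness of the leaves (transported via $\phi$) to cap off $u|_{S^1}$ by a disk $v$ in the leaf $F$, glue to a sphere, note that $\int_\D v^*\wt\om=0$ since $F$ is isotropic, and conclude via integrality of $[\om_\FS]$ and hypothesis (\ref{eq:int S 2}) that every element of $S(\wt M,N)$ lies in $\pi\Z$. The only cosmetic difference is that the paper packages the sphere integrality as a single preliminary statement $\int_{S^2}\wt w^*\wt\om\in\pi\Z$ for all $\wt w$, whereas you split into the $\CP^n$- and $M$-components at the end.
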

\begin{proof}[Proof of Claim \ref{claim:A CP n M N}] It follows from our hypothesis (\ref{eq:int S 2}) that
\begin{equation}\label{eq:int S 2 wt u}\int_{S^2}\wt w^*\wt\om\in\pi\Z,\quad\forall\wt w\in C^\infty(S^2,\wt M).
\end{equation}
Let $\wt u\in C^\infty(\D,\wt M)$ be such that $\wt u(S^1)$ is contained in some isotropic leaf $F$ of $N$. We choose a map $f\in C^\infty(\D,\D)$ that restricts to an orientation preserving diffeomorphism from $B^2_{\frac12}$ to $B^2_1$ and satisfies $f(z)=z/|z|$ on $\D\wo B^2_{\frac12}$. 

The pre-image $\phi^{-1}(F)$ is an isotropic leaf of $M'$. By hypothesis it is simply-connected. Hence the same holds for $F$. It follows that there exists a map $\wt v\in C^\infty(\D,F)$ satisfying $\wt v=\wt u$ on $S^1$. Modifying $\wt v$, we may assume that $\wt v(z)=\wt v(z/|z|)$ for every $z\in\D\wo B^2_{\frac12}$. 

We denote by $\BAR{\D}$ the disk with the reversed orientation and by $\D\#\BAR{\D}$ the smooth oriented manifold obtained by concatenating the two disks along their boundary. We define $\wt w:\D\#\BAR\D\to\CP^n\x M$ to be the concatenation of $\wt u\circ f$ and $\wt v$. This is a smooth map. It follows that
\begin{equation}\label{eq:int D BAR D}\int_{\D\#\BAR\D}\wt w^*\wt\om=\int_\D(\wt u\circ f)^*\wt\om-\int_\D\wt v^*\wt\om=\int_\D\wt u^*\wt\om-0.\end{equation}
Since $\D\cup\BAR{\D}$ is diffeomorphic to $S^2$, (\ref{eq:int S 2 wt u}) implies that $\int_{\D\#\BAR\D}\wt w^*\wt\om\in\pi\Z$. Combining this with (\ref{eq:int D BAR D}), inequality (\ref{eq:A CP n M N}) follows. This proves Claim \ref{claim:A CP n M N}.
\end{proof}
This claim and the hypothesis $\dim M'<2n$ contradict Theorem \ref{thm:action}. Hence the presymplectic embedding $\phi:M'\to\wt M$ does not exist. This proves Corollary \ref{cor:presympl}.
\end{proof}
\appendix
\section{Lifting paths}
The following result was used in the proof of Lemma \ref{le:lift}. Let $M',M$ be smooth manifolds, $f:M'\to M$ a smooth proper submersion, $p'\in M'$, and $x\in C^\infty([0,1],M)$. 
\begin{prop}[lifting a path]\label{prop:lift path} If $f(p')=x(0)$ then there exists a path $x'\in C^\infty([0,1],M')$, such that
\[f\circ x'=x,\quad x'(0)=p'.\]
\end{prop}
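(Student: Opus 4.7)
The plan is to construct the lift by integrating a horizontal flow along $x$. First, extend $x$ to a smooth map $\tilde x : I \to M$ on an open interval $I \supset [0,1]$. Equip $M'$ with a Riemannian metric and let $H \sub TM'$ denote the orthogonal complement of $\ker df$. Since $f$ is a submersion, $df$ restricts at every point $p' \in M'$ to a linear isomorphism $H_{p'} \to T_{f(p')} M$.

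Form the pullback
\[E := \big\{(t, p') \in I \x M' \,\big|\, f(p') = \tilde x(t)\big\}.\]
A standard transversality argument (using that $f$ is a submersion) shows $E$ is a smooth submanifold of $I \x M'$ of dimension $1 + \dim M' - \dim M$, with
\[T_{(t,p')} E = \big\{(a, v) \in T_t I \x T_{p'} M' \,\big|\, df(p') v = a \dot{\tilde x}(t)\big\}.\]
Define a smooth vector field $\xi$ on $E$ by $\xi(t, p') := (1, v)$, where $v \in H_{p'}$ is the unique horizontal vector satisfying $df(p') v = \dot{\tilde x}(t)$. Then $\xi(t, p') \in T_{(t,p')} E$. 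The projection $\pi : E \to I$, $(t, p') \mapsto t$, is a proper smooth submersion (properness inherited from $f$, since $\pi^{-1}(K) \sub K \x f^{-1}(\tilde x(K))$ for compact $K \sub I$), and $d\pi \circ \xi = \dd_t$.

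Let $\gamma$ be the maximal forward integral curve of $\xi$ with $\gamma(0) = (0, p')$; this initial condition lies in $E$ since $f(p') = x(0) = \tilde x(0)$. The relation $d\pi(\dot\gamma) = 1$ forces $\gamma(t) = (t, x'(t))$, so $f \circ x' = \tilde x$ on the domain of $\gamma$. If this domain were $[0, T^*)$ with $T^* \leq 1$, then $\gamma([0, T^*))$ would lie in the compact set $\pi^{-1}([0, T^*])$; the standard escape lemma for smooth vector fields on manifolds (trajectories with a finite forward endpoint must leave every compact set) would then force $\gamma$ to extend past $T^*$, contradicting maximality. Hence $\gamma$, and therefore $x'$, is defined and smooth on all of $[0, 1]$. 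Restricting yields the desired lift with $x'(0) = p'$.

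The main subtlety is ruling out a finite lifetime of the maximal integral curve; this is precisely where properness of $f$ is used, via properness of $\pi$, to confine the trajectory to a compact subset of $E$ and thereby guarantee global existence.
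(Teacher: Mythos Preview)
Your proof is correct and follows essentially the same strategy as the paper: choose a horizontal complement $H$ to $\ker df$, lift $\dot x$ horizontally, and use properness of $f$ to rule out finite-time blow-up of the maximal lift. The only difference is packaging---you work on the pullback manifold $E$ and invoke the escape lemma, whereas the paper proves a local existence/uniqueness lemma and then bounds $|\dot y'|$ explicitly via the operator norm of the horizontal lift on the compact set $f^{-1}(x([0,1]))$; these are equivalent ways to execute the same idea.
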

The proof of this lemma is based on the following.
\begin{lemma}[locally lifting a path]\label{le:loc lift} Let $t_0\in[0,1]$ and $H\sub TM'$ be a (smooth) subbundle that is complementary to $\ker df$, i.e., satisfies $TM'=H\oplus\ker df$. 
\begin{enui}
\item\label{le:loc lift:ex} (local existence) If $f(p')=x(t_0)$ then there exists a (relatively) open neighbourhood $V$ of $t_0$ in $[0,1]$ and a path $x'\in C^\infty(V,M')$, satisfying
\begin{eqnarray}\label{eq:dot x '}&\dot x'(t)\in H_{x'(t)},\quad f\circ x'(t)=x(t),\quad\forall t\in V,&\\
\label{eq:x' t 0}&x'(t_0)=p'.&
\end{eqnarray}
\item\label{le:loc lift:unique} (local uniqueness) If $V_0,V_1$ are open neighbourhoods of $t_0$ in $[0,1]$ and $x'_0,x'_1\in C^\infty\big([0,1],M'\big)$ are paths, satisfying (\ref{eq:dot x '},\ref{eq:x' t 0}) then there exists an open neighbourhood $V\sub V_0\cap V_1$ of $t_0$ in $[0,1]$, such that $x'_0=x'_1$ on $V$.
\end{enui}
\end{lemma}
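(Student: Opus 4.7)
The plan is to reduce both parts to the standard local existence and uniqueness theorem for time-dependent ODEs on $M'$, the key observation being that the complementarity of $H$ and $\ker df$ provides a canonical \emph{horizontal lift} operator.

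For (\ref{le:loc lift:ex}), I would first use that $f$ is a submersion to work on a neighbourhood $U$ of $p'$ in adapted coordinates, in which $f$ becomes a linear projection $(y,z)\mapsto y$. Since $H\oplus\ker df=TM'$, the restriction $df(q)|_{H_q}:H_q\to T_{f(q)}M$ is a linear isomorphism for every $q\in U$, and its inverse $\mathrm{hor}_q:T_{f(q)}M\to H_q$ depends smoothly on $q$ (in the chart, $H$ is the graph of some smooth bundle map, so $\mathrm{hor}$ is given by an explicit smooth formula). I would then define the time-dependent vector field
$$Y:[0,1]\x U\to TM',\quad Y(t,q):=\mathrm{hor}_q(\dot x(t)),$$
and apply the standard local existence theorem for ODEs to obtain a (relatively) open neighbourhood $V$ of $t_0$ in $[0,1]$ and a smooth path $x'\in C^\infty(V,M')$ satisfying $x'(t_0)=p'$ and $\dot x'(t)=Y(t,x'(t))$. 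By construction $\dot x'(t)\in H_{x'(t)}$, yielding the first equation of (\ref{eq:dot x '}). For the second, $(f\circ x')(t_0)=f(p')=x(t_0)$ and
$$\frac{d}{dt}(f\circ x')(t)=df(x'(t))\cdot\mathrm{hor}_{x'(t)}(\dot x(t))=\dot x(t),$$
so $f\circ x'$ and $x$ agree at $t_0$ and have the same derivative, hence coincide on $V$ (after possibly shrinking it).

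For (\ref{le:loc lift:unique}), any path $x'_i$ satisfying (\ref{eq:dot x '}) and (\ref{eq:x' t 0}) must solve the very same ODE $\dot x'_i(t)=Y(t,x'_i(t))$: indeed $\dot x'_i(t)\in H_{x'_i(t)}$ by (\ref{eq:dot x '}), and differentiating $f\circ x'_i=x$ gives $df(x'_i(t))\dot x'_i(t)=\dot x(t)$, so $\dot x'_i(t)$ is forced to be the unique horizontal lift of $\dot x(t)$ at $x'_i(t)$, namely $Y(t,x'_i(t))$. Local ODE uniqueness with the common initial condition $x'_0(t_0)=x'_1(t_0)=p'$ then forces $x'_0=x'_1$ on some open neighbourhood $V\sub V_0\cap V_1$ of $t_0$ in $[0,1]$.

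The statement is essentially standard, and the only mild point to check is the smooth dependence of $\mathrm{hor}$ on the base point, which is transparent in a submersion chart. Note that properness of $f$ plays no role here; it is only needed later, in Proposition \ref{prop:lift path}, to extend the local solutions produced above across the whole compact interval $[0,1]$.
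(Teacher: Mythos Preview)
Your proposal is correct and follows essentially the same approach as the paper: both reduce to submersion charts where $f$ is a projection, construct a time-dependent vector field via the horizontal lift determined by $H$, and invoke Picard--Lindel\"of for existence and uniqueness. The only cosmetic difference is that the paper writes the ODE on the fiber factor $\R^m$ (so that $x'=(y,x)$ makes $f\circ x'=x$ automatic), whereas you write it on the full chart $U$ and verify $f\circ x'=x$ by differentiating; both amount to the same computation.
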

\begin{rmk}[global uniqueness]\label{rmk:global} For $i=0,1$ let $t_i\in[0,1]$ and $x'_i\in C^\infty\big([0,t_i],M'\big)$ be a path satisfying (\ref{eq:dot x '}) and $x'_i(0)=p'$. Then $x'_0=x'_1$ on $\big[0,\min\big\{t_0,t_1\big\}\big]$. This follows from Lemma \ref{le:loc lift}(\ref{le:loc lift:unique}).
\end{rmk}
\begin{proof}[Proof of Lemma \ref{le:loc lift}] By using a chart in $M$ we may assume \Wlog that $M=\R^n$. Using the Implicit Function Theorem and our hypothesis that $f$ is a smooth submersion, we may further assume \Wlog that $M'=\R^m\x\R^n$ and $f=\pr_2:\R^m\x\R^n\to\R^n$, the canonical projection.

For $t\in[0,1]$ and $y\in\R^m$ we define $X_t(y)\in\R^m$ to be the unique vector, such that
\begin{equation}\label{eq:X t y}\big(X_t(y),\dot x(t)\big)\in H_{(y,x(t))}.\end{equation}
This vector exists and is unique, since $H$ is complementary to $\ker df=\ker\pr_2$. The family $(X_t)_{t\in[0,1]}$  is a smooth time-dependent vector field on $\R^m$. We write $p'=\big(y_0,x(t_0)\big)$.\\

\textbf{We prove (\ref{le:loc lift:ex}).} By the Picard-Lindel\"of theorem there exist an open neighbourhood $V$ of $t_0$ in $[0,1]$ and a smooth solution $y\in C^\infty\big(V,\R^m\big)$ of the ordinary differential equation
\[\dot y=X_t\circ y,\quad y(t_0)=y_0.\]
Using (\ref{eq:X t y}), the path $x':=(y,x):[0,1]\to M'=\R^m\x\R^n$ satisfies (\ref{eq:dot x '},\ref{eq:x' t 0}). This proves (\ref{le:loc lift:ex}).\\

\textbf{Statement (\ref{le:loc lift:unique})} follows from a similar argument. This proves Lemma \ref{le:loc lift}.
\end{proof}
\begin{proof}[Proof of Proposition \ref{prop:lift path}]\setcounter{claim}{0} We choose a subbundle $H\sub TM'$ that is complementary to $\ker df$. (We may define $H$ to be the normal bundle of $\ker df$ with respect to some Riemannian metric.) We define
\begin{align}\label{eq:y'}y':=\bigcup\big\{x'\,&\big|\,t_1\in[0,1],\,x'\in C^\infty\big([0,t_1],M'\big):\,\\
\nn&(\ref{eq:dot x '})\textrm{ with }V=[0,t_1],\,x'(0)=p'\big\}\sub[0,1]\x M'.\end{align}
It follows from Remark \ref{rmk:global} that there exists $t_0\in[0,1]$ such that $y'$ is a smooth map from $[0,t_0)$ or $[0,t_0]$ to $M'$. Proposition \ref{prop:lift path} is a consequence of the following claim.
\begin{claim}\label{claim:y'} The domain of $y'$ is $[0,1]$. 
\end{claim}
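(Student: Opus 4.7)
The plan is to let $T := \sup\{t \in [0,1] \mid t \in \operatorname{dom}(y')\}$ and show $T = 1$ with $T \in \operatorname{dom}(y')$. Suppose for contradiction that either $T < 1$, or $T = 1$ but $y'$ is not defined at $T$. In both cases it suffices to extend $y'$ smoothly to an interval strictly containing its current domain, which contradicts the definition of $T$ in the first case and proves the claim directly in the second.

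First I would use properness of $f$ to confine the lift. Since $x([0,T])$ is compact and $f$ is proper, the set $K := f^{-1}(x([0,T]))$ is compact in $M'$. The relation $f \circ y' = x$ forces $y'(t) \in K$ for every $t$ in the domain of $y'$. Choosing a sequence $t_n \to T^-$ in $\operatorname{dom}(y')$, the sequence $y'(t_n)$ lies in $K$, so after passing to a subsequence we obtain a limit point $q \in K$ with $f(q) = x(T)$. Then Lemma \ref{le:loc lift}(\ref{le:loc lift:ex}) applied at $(T,q)$ produces a smooth local lift $z'$ defined on an open neighborhood $V$ of $T$ in $[0,1]$ with $z'(T) = q$, $f\circ z' = x$, and $\dot z'(t) \in H_{z'(t)}$ on $V$.

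Next I would compare $y'$ and $z'$ near $T$ using the ODE viewpoint from the proof of Lemma \ref{le:loc lift}. Working in a chart around $q$ in which $M' = \R^m \times \R^n$ and $f = \pr_2$, the horizontal lift condition reduces to a smooth time-dependent ODE $\dot v = X_t(v)$ on $\R^m$. Writing $z'(t) = (u(t),x(t))$, both $u$ and the $\R^m$-component of $y'$ (on its domain, once it enters the chart) solve this ODE. By continuous dependence of ODE solutions on initial conditions, for $n$ sufficiently large the solution $w_n$ with $w_n(t_n) = \pr_1(y'(t_n))$ exists on a common interval $[T-\eta, T+\eta] \cap [0,1]$ and converges uniformly to $u$ on this interval. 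By local uniqueness (Lemma \ref{le:loc lift}(\ref{le:loc lift:unique})), $(w_n, x)$ agrees with $y'$ on $[t_n, T) \cap V$. Hence $y'$ extends smoothly across $T$ by setting it equal to $(w_n, x)$ on $[t_n, T+\eta] \cap [0,1]$, contradicting the maximality of $T$ (if $T<1$) or directly establishing $T \in \operatorname{dom}(y')$ (if $T=1$).

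The main obstacle is the last step: converting the mere existence of a subsequential limit $q$ into a genuine smooth extension of $y'$ up to and past $T$. Properness provides the compactness needed to extract $q$, but without continuous dependence on initial conditions together with the local uniqueness of horizontal lifts, different subsequences could a priori accumulate at different points and yield incompatible extensions. Both ingredients come from the underlying smooth ODE $\dot v = X_t(v)$, which was already exploited in the proof of Lemma \ref{le:loc lift}.
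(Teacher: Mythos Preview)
Your argument is correct, and it follows the same overall strategy as the paper (use properness to confine $y'$ to a compact set, then extend through the endpoint via Lemma~\ref{le:loc lift}), but the mechanism for reaching the endpoint differs. The paper introduces the horizontal lift operator $\Phi_{x'}:T_{f(x')}M\to H_{x'}$, notes that $\dot y'(t)=\Phi_{y'(t)}\dot x(t)$, and uses compactness of $K'=f^{-1}(x([0,1]))$ to bound $\sup_{x'\in K'}|\Phi_{x'}|$; this yields a uniform bound on $|\dot y'|$, so $y'$ is Lipschitz and hence genuinely converges to a single point $y'_0$ as $t\uparrow t_0$, after which a single application of Lemma~\ref{le:loc lift}(\ref{le:loc lift:ex}) at $y'_0$ suffices. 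You instead extract only a subsequential limit $q$ and then appeal to continuous dependence of ODE solutions on initial data to produce a horizontal lift $(w_n,x)$ through $y'(t_n)$ that persists past $T$; uniqueness forces $(w_n,x)=y'$ on $[t_n,T)$, giving the extension. Your route avoids the explicit velocity estimate but needs the extra ingredient of continuous dependence to rule out incompatible accumulation points; the paper's route is slightly cleaner because the Lipschitz bound delivers a genuine limit outright, making the patching step immediate.
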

\begin{pf}[Proof of Claim \ref{claim:y'}] We define
\[X:=\big\{(x',v)\,\big|\,v\in T_{f(x')}M\big\},\quad\Phi:X\to H,\,\Phi_{x'}v:=\Phi(x',v):=v',\]
where $v'\in H_{x'}$ is the unique vector satisfying $df(x')v'=v$. Since $df(x')$ is surjective and $T_{x'}M'=H_{x'}\oplus\ker df(x')$, this vector exists and is unique, hence $\Phi$ is well-defined. This map is smooth, since $H$ is smooth. We choose Riemannian metrics $g$ on $M$ and $g'$ on $M'$. Since $f$ is proper, the pre-image $K':=f^{-1}\big(x([0,1])\big)\sub M'$ is compact. Therefore,
\[C:=\sup\big\{\big|\Phi_{x'}\big|\,\big|\,x'\in K'\big\}<\infty.\]
Here $|\Phi_{x'}|$ denotes the operator norm of the linear map $\Phi_{x'}:T_{f(x')}M\to T_{x'}M'$ \wrt the norms induced by $g$ and $g'$. 

Let $t\in[0,t_0)$. By (\ref{eq:dot x '}) we have $\dot y'(t)\in H_{y'(t)}$ and $df(y'(t))\dot y'(t)=\dot x(t)$, and therefore
\[\dot y'(t)=\Phi_{y'(t)}\dot x(t).\]
Since $y'\big([0,t_0)\big)\sub K'$, it follows that
\[\big|\dot y'(t)\big|\leq C\big|\dot x(t)\big|\leq C\max_{t\in[0,1]}\big|\dot x(t)\big|.\]
It follows that $y'(t)$ converges to some point $y'_0$, as $t\uparrow t_0$. 

Assume now by contradiction that the domain of $y'$ is not equal to $[0,1]$. We choose $V,x'$ as in Lemma \ref{le:loc lift}(\ref{le:loc lift:ex}), with $p'$ replaced by $y'_0$. Concatenating $y'$ with $x'$, we obtain a solution $z'$ of (\ref{eq:dot x '}) with $V$ replaced by an interval that strictly contains the domain of $y'$, such that $z'(0)=p'$. (Here we use Lemma \ref{le:loc lift}(\ref{le:loc lift:unique}), which ensures that $x'$ and $y'$ agree on the intersection of $V$ with the domain of $y'$, if we shrink $V$.) By (\ref{eq:y'}) we have $z'\sub y'$.
This is a contradiction. It follows that the domain of $y'$ is equal to $[0,1]$. This proves Claim \ref{claim:y'} and completes the proof of Proposition \ref{prop:lift path}.
\end{pf}
\end{proof}
\section{Coisotropic subspaces of presymplectic vector spaces}
The following lemma was used in the proof of Lemma \ref{le:V V' V''}. Let $(V,\om)$ be a finite-dimensional presymplectic vector space and $W\sub V$ a linear subspace. We denote by $i:W\to V$ the inclusion, and by
\[W^\om:=\big\{v\in V\,\big|\,\om(v,w)=0,\,\forall w\in W\big\}\]
the presymplectic complement of $W$ in $V$.
\begin{lemma}\label{le:W coiso} The subspace $W$ is coisotropic iff 
\begin{equation}\label{eq:dim W}\dim W+\dim W^{i^*\om}\geq\dim V+\dim V^\om.%
\footnote{By definition, $W^{i^*\om}$ is the presymplectic complement of $W$ inside $W$.}%
\end{equation}
\end{lemma}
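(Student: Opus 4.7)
The plan is to reduce the equivalence to the single dimension identity
\[
\dim W+\dim W^\om \;=\; \dim V+\dim(V^\om\cap W),
\]
which holds for every subspace $W$ of a finite-dimensional presymplectic space $(V,\om)$, and then to exploit the trivial inclusions $V^\om\sub W^\om$ and $W^{i^*\om}=W\cap W^\om\sub W^\om$.

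First I would establish the dimension identity. Consider the map $\om^\flat\colon V\to V^*$, $v\mapsto\om(v,\cdot\,)$. By skew-symmetry its kernel is $V^\om$ and its image is the annihilator $(V^\om)^0$, so $\om^\flat$ has rank $\dim V-\dim V^\om$. Since $W^\om=(\om^\flat)^{-1}(W^0)$, the rank--nullity theorem applied to $\om^\flat$ gives
\[
\dim W^\om=\dim V^\om+\dim\big((V^\om)^0\cap W^0\big)=\dim V^\om+\dim V-\dim(V^\om+W),
\]
and expanding $\dim(V^\om+W)=\dim V^\om+\dim W-\dim(V^\om\cap W)$ yields the claimed identity. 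The second ingredient is the observation that $W^{i^*\om}=W\cap W^\om$ (which is the \emph{definition} recalled in the footnote) and the trivial fact $V^\om\sub W^\om$, so that $V^\om\cap W\sub W^{i^*\om}\sub W^\om$.

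For the ``$\then$'' direction, assume $W$ is coisotropic, i.e., $W^\om\sub W$. Then from $V^\om\sub W^\om\sub W$ we get $V^\om\cap W=V^\om$, and from $W^\om\sub W$ we get $W^{i^*\om}=W\cap W^\om=W^\om$. Substituting into the dimension identity gives $\dim W+\dim W^{i^*\om}=\dim V+\dim V^\om$, which is \eqref{eq:dim W} with equality.

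For the converse ``$\follows$'' direction, suppose \eqref{eq:dim W} holds. Using $W^{i^*\om}\sub W^\om$ and the dimension identity,
\[
\dim V+\dim V^\om\;\leq\;\dim W+\dim W^{i^*\om}\;\leq\;\dim W+\dim W^\om\;=\;\dim V+\dim(V^\om\cap W),
\]
so $\dim(V^\om\cap W)\geq\dim V^\om$, which combined with $V^\om\cap W\sub V^\om$ forces $V^\om\sub W$. Feeding this back, $\dim V^\om=\dim(V^\om\cap W)$, so the same chain of inequalities also gives $\dim W^{i^*\om}=\dim W^\om$; since $W^{i^*\om}\sub W^\om$, this equality of dimensions implies $W^\om=W^{i^*\om}\sub W$, i.e., $W$ is coisotropic. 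The only non-routine step is the opening rank--nullity computation for $\om^\flat$; the rest is bookkeeping around the two trivial inclusions $V^\om\sub W^\om$ and $W^{i^*\om}\sub W^\om$.
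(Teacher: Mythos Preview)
Your proof is correct and follows essentially the same approach as the paper: both establish the dimension identity $\dim W+\dim W^\om=\dim V+\dim(V^\om\cap W)$ (the paper's Lemma~\ref{le:dim W om}) and then use the inclusions $V^\om\sub W^\om$ and $W^{i^*\om}=W\cap W^\om\sub W^\om$ to deduce each direction. The only minor difference is in how the identity is derived---you compute directly with annihilators, while the paper uses a transpose argument $(\flat_\om)^*\iota=-\flat_\om$ to equate $\dim\im(\flat_\om i)$ with $\dim\im(i^*\flat_\om)$---but the overall strategy is the same.
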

The proof of this lemma is based on the following.
\begin{lemma}\label{le:dim W om} We have
\begin{equation}\label{eq:dim W om}\dim W+\dim W^\om=\dim V+\dim\big(V^\om\cap W\big).
\end{equation}
\end{lemma}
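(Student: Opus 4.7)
The plan is to prove the dimension identity via a standard rank--nullity argument applied to the linear map induced by $\om$ between $V$ and the dual of $W$. Concretely, I would define
\[\Phi:V\to W^*,\quad \Phi(v)(w):=\om(v,w),\]
and then identify both the kernel and the image of $\Phi$ with objects appearing in (\ref{eq:dim W om}).

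First I would compute the kernel. By the definition of the presymplectic complement, $v\in\ker\Phi$ iff $\om(v,w)=0$ for all $w\in W$, i.e., $\ker\Phi=W^\om$. Next I would compute the image via its annihilator inside $W$: the annihilator $(\im\Phi)^0\sub W$ consists of those $w\in W$ such that $\om(v,w)=0$ for every $v\in V$, which is precisely $W\cap V^\om$. Since $V,W$ are finite-dimensional, $\dim\im\Phi=\dim W-\dim(W\cap V^\om)$.

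Applying rank--nullity to $\Phi$ then gives
\[\dim V=\dim\ker\Phi+\dim\im\Phi=\dim W^\om+\dim W-\dim(V^\om\cap W),\]
which rearranges to (\ref{eq:dim W om}). I do not expect any serious obstacle: the argument is purely linear-algebraic, and the only point that requires a line of care is the identification of the annihilator of $\im\Phi$, which follows directly from unpacking the definition of $\Phi$ together with the fact that the pairing between $W$ and $W^*$ is non-degenerate. The result holds for arbitrary skew-symmetric $\om$, so no non-degeneracy assumption on the ambient form is needed.
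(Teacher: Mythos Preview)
Your proof is correct and follows essentially the same approach as the paper: both arguments apply rank--nullity to the map $\Phi=i^*\flat_\om:V\to W^*$, identifying $\ker\Phi=W^\om$ and showing $\dim\im\Phi=\dim W-\dim(V^\om\cap W)$. The only tactical difference is that you obtain the rank via the annihilator $(\im\Phi)^0=W\cap V^\om$, whereas the paper obtains it by passing to the adjoint map $\flat_\om i:W\to V^*$ (using skew-symmetry and the double-dual isomorphism) and computing $\ker(\flat_\om i)=V^\om\cap W$; your route is slightly more direct.
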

\begin{rmk}\label{rmk:W i om}Since $W^{i^*\om}\sub W^\om$, Lemma \ref{le:dim W om} implies that inequality ``$\leq$'' in (\ref{eq:dim W}) holds, for every linear subspace $W$.
\end{rmk}
\begin{proof}[Proof of Lemma \ref{le:dim W om}]\setcounter{claim}{0} We define the linear map
\[\flat_\om:V\to V^*,\quad\flat_\om v:=\om(v,\cdot).\]
Then $W^\om=\ker(i^*\flat_\om)$, and therefore,
\begin{equation}\label{eq:dim V}\dim\im(i^*\flat_\om)+\dim W^\om=\dim V.\end{equation}
Consider the canonical isomorphism $\iota:V\to V^{**}$, $\iota(v)(\phi):=\phi(v)$. A direct calculation shows that $(\flat_\om)^*\iota=-\flat_\om$. It follows that $(\flat_\om i)^*\iota=-i^*\flat_\om$, and therefore
\[\dim\im(\flat_\om i)=\dim\im(\flat_\om i)^*=\dim\im\big((\flat_\om i)^*\iota\big)=\dim\im(i^*\flat_\om).\]
Combining this with (\ref{eq:dim V}), we obtain
\begin{align}\nn\dim W+\dim W^\om&=\dim\ker(\flat_\om i)+\dim\im(\flat_\om i)+\dim W^\om\\
\label{eq:dim W dim W om}&=\dim\ker(\flat_\om i)+\dim V.
\end{align}
Since $\ker(\flat_\om i)=V^\om\cap W$, equality (\ref{eq:dim W om}) follows. This proves Lemma \ref{le:dim W om}.
\end{proof}
\begin{proof}[Proof of Lemma \ref{le:W coiso}] \textbf{We prove ``$\then$''.} Assume that $W$ is coisotropic. Then $W^\om\sub W^{i^*\om}$ and therefore, using Lemma \ref{le:dim W om}, we have
\begin{align*}\dim W+\dim W^{i^*\om}&\geq\dim W+\dim W^\om\\
&=\dim V+\dim\big(V^\om\cap W\big).
\end{align*}
Since $V^\om\sub W^\om\sub W$, inequality (\ref{eq:dim W}) follows. This proves ``$\then$''.\\

To prove the opposite implication, assume that (\ref{eq:dim W}) holds. Using Lemma \ref{le:dim W om}, it follows that
\begin{align*}\dim W^\om&=\dim V-\dim W+\dim\big(V^\om\cap W\big)\\
&\leq\dim W^{i^*\om}-\dim V^\om+\dim\big(V^\om\cap W\big)\\
&\leq\dim W^{i^*\om}.
\end{align*}
Since $W^\om\cont W^{i^*\om}$, it follows that $W^\om=W^{i^*\om}\sub W$. Therefore, $W$ is coisotropic. This proves ``$\follows$'' and completes the proof of Lemma \ref{le:W coiso}.
\end{proof}

\end{document}